\documentclass[11pt,reqno]{amsart}
\usepackage[T1]{fontenc}
\usepackage{lmodern}
\usepackage[a4paper,margin=28mm]{geometry}
\usepackage{amsmath,amssymb,amsthm,mathtools}
\usepackage{mathrsfs}
\usepackage[expansion=false]{microtype}
\usepackage{enumitem}
\usepackage{cite}
\usepackage{needspace}
\usepackage[hidelinks]{hyperref}
\hypersetup{pdftitle={Nonfinitely based intervals of power semiring varieties},
	pdfauthor={Xiaolei Shao and Zidong Gao}
 pdfsubject={Power semirings, variety intervals, and the finite basis problem}}

\newtheorem{theorem}{Theorem}[section]
\newtheorem{lemma}[theorem]{Lemma}
\newtheorem{corollary}[theorem]{Corollary}
\newtheorem{proposition}[theorem]{Proposition}
\theoremstyle{definition}
\newtheorem{definition}[theorem]{Definition}

\numberwithin{equation}{section}

\newcommand{\Pow}{\mathcal{P}}
\newcommand{\Full}[1]{\mathcal{P}(#1)^0}
\newcommand{\Var}{\mathsf{V}}
\newcommand{\Id}{\operatorname{Id}}
\newcommand{\Hom}{\operatorname{Hom}}
\newcommand{\occ}{\operatorname{occ}}
\newcommand{\expon}{\operatorname{exp}}
\newcommand{\Z}{\mathbb{Z}}
\newcommand{\F}{\mathbb{F}}
\newcommand{\E}{\mathbb{E}}
\newcommand{\Prb}{\mathbb{P}}
\newcommand{\bu}{\mathbf{u}}
\newcommand{\bv}{\mathbf{v}}
\newcommand{\bw}{\mathbf{w}}
\newcommand{\bq}{\mathbf{q}}
\newcommand{\bp}{\mathbf{p}}
\newcommand{\bt}{\mathbf{t}}
\newcommand{\calI}{\mathcal{I}}
\newcommand{\calC}{\mathcal{C}}
\newcommand{\calE}{\mathcal{E}}
\newcommand{\calG}{\mathcal{G}}
\newcommand{\calU}{\mathcal{U}}
\newcommand{\calV}{\mathcal{V}}
\newcommand{\calW}{\mathcal{W}}
\renewcommand{\leq}{\leqslant}
\renewcommand{\geq}{\geqslant}

\title[Intervals of power semiring varieties]{Nonfinitely based intervals of power semiring varieties}
\author{Xiaolei Shao}
\author{Zidong Gao}
\date{}
\subjclass[2020]{Primary 16Y60; Secondary 08B05, 08B15, 20D99}
\keywords{Additively idempotent semiring, power semiring, finite basis problem,
 variety interval, zero adjunction, finite group}

\begin{document}
\begin{abstract}
We study intervals in the lattice of additively idempotent semiring varieties associated with power semirings of finite groups. We prove that every variety lying above the nonempty power semiring of a finite group of order at least three and below a variety generated by finitely many full power semirings of finite groups is nonfinitely based. In fact, none of these varieties admits an identity basis with a fixed finite bound on the number of variables. In particular, adjoining the empty set gives an interval consisting entirely of nonfinitely based varieties. We also construct equational upper bounds that are closed under zero adjunction and yield further intervals with the same property. The proof combines finite commutative quotient semirings, a cardinality estimate for kernel blockers over finite modules, and a uniform estimate for fibres of ordered products in finite groups. As a consequence, the full power semiring of a finite group is finitely based precisely when the group has at most two elements.
\end{abstract}
\maketitle

\section{Introduction}
An \emph{additively idempotent semiring}, or \emph{ai-semiring}, is an algebra $(S,+,\cdot)$ whose additive reduct is a commutative idempotent semigroup, whose multiplicative reduct is a semigroup, and in which multiplication distributes over addition on both sides. For an ai-semiring $S$, let $\Var(S)$ denote the variety it generates. A variety is \emph{finitely based} if its identities admit a finite basis, and is \emph{nonfinitely based} otherwise. An algebra is finitely based if the variety it generates is finitely based.

For a semigroup $S$, let $\Pow(S)$ denote the set of all nonempty subsets of $S$, with operations
\[
 A+B=A\cup B,\qquad AB=\{ab:a\in A,\ b\in B\}.
\]
This ai-semiring is called the \emph{power semiring} of $S$. If the empty set is included, the resulting semiring is denoted by $\Full{S}$. More generally, for an ai-semiring $S$, the semiring $S^0=S\cup\{0\}$ is obtained by adjoining a new element satisfying
\[
 a+0=0+a=a,\qquad a0=0a=0\qquad(a\in S^0).
\]
Thus the notation $\Full{S}$ agrees with zero adjunction to $\Pow(S)$. The signature consists of the two binary operations $+$ and $\cdot$; expansions by constants are considered in Appendix~\ref{app:constants}.

The finite basis problem for power semirings was raised by Dolinka~\cite{d}. Gusev and Volkov~\cite{gv} obtained nonfinite-basis results for power semirings of finite nonabelian solvable groups, and Dolinka, Gusev and Volkov~\cite{dgv} extended this line of investigation to inverse semigroups. These results are closely related to the study of identities of ai-semirings and their multiplicative reducts in~\cite{jrz}.

Gao, Ren, Shao and Yue~\cite{grsy} proved that $\Pow(G)$ is nonfinitely based for a finite group $G$ if and only if $|G|\geq 3$. Their result is part of an interval theorem: every variety in
\[
 [\Var(S_7),\Var(\Pow(G))]
\]
is nonfinitely based when $|G|\geq 3$~\cite[Corollary~3.5]{grsy}. Here $S_7$ denotes the three-element ai-semiring used in that paper. The effect of zero adjunction on identities and on intervals of varieties is studied in~\cite{wrz,grz}.

In this paper we obtain intervals whose upper endpoints are generated by \emph{full} power semirings and whose lower endpoints are generated by \emph{nonempty} power semirings. For varieties $\calU\subseteq\calW$, write
\[
 [\calU,\calW]=\{\calV:\calU\subseteq\calV\subseteq\calW\}.
\]
A variety has \emph{infinite axiomatic rank} if, for every positive integer $k$, its identities involving at most $k$ variables do not form a basis for all its identities. In particular, such a variety is nonfinitely based.

\begin{theorem}\label{thm:interval}
Let $G_1,\ldots,G_t$ be finite groups, where $t\geq 1$, and put
\[
 \calW=\Var\bigl(\Full{G_1},\ldots,\Full{G_t}\bigr).
\]
Let $H$ be a finite group with $|H|\geq 3$. Then every ai-semiring variety $\calV$ satisfying
\begin{equation}\label{eq:maininterval}
 \Var(\Pow(H))\subseteq\calV\subseteq\calW
\end{equation}
has infinite axiomatic rank and is nonfinitely based.
\end{theorem}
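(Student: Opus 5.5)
The plan is the standard ``interval'' argument for infinite axiomatic rank. For each $k$ (and each large prime or integer parameter $n$) we construct an ai-semiring identity $\bu_n\approx\bv_n$ in $n$ variables, with $n$ large compared with $k$, such that:
\begin{enumerate}[label=(\roman*)]
\item $\bu_n\approx\bv_n$ holds in $\calW$, hence in every $\calV$ of the interval;
\item $\bu_n\approx\bv_n$ fails in $\Pow(H)$, hence in every such $\calV$;
\item there is an ai-semiring $S_n$ that satisfies every identity in at most $k$ variables valid in $\calW$, but which fails $\bu_n\approx\bv_n$ while lying in $\Var(\Pow(H))$-compatible position, in the sense that $\Id(\calV)$ restricted to $k$ variables holds in $S_n$ but $S_n\notin\calV$.
\end{enumerate}
Concretely, it suffices to find a finite algebra $S_n$ (a finite commutative quotient semiring, as the abstract suggests) such that every $k$-generated subalgebra of $S_n$ lies in $\calW$, while $S_n$ itself fails an identity true in every $\calV$ of the interval. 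Since $\Var(\Pow(H))\subseteq\calV$ we can take as the failing identity one true in $\calW$ (so true in $\calV$) but failing in $S_n$; and the $k$-variable identities of $\calV$ hold in $S_n$ because its $k$-generated subalgebras lie in $\calW\supseteq\calV$? No: that inclusion goes the wrong way. So instead we need the $k$-generated subalgebras of $S_n$ to lie in $\Var(\Pow(H))\subseteq\calV$, while $S_n\notin\calW$. Then $S_n$ satisfies all $k$-variable identities of $\calV$ but not all identities of $\calV$, giving infinite axiomatic rank.

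\textbf{Step 1 (constructing $S_n$).} Take a finite module over $\Z$, say $M=\Z_p^{\,m}$ or $\Z_n$, and define a commutative ai-semiring $S_n$ built from subsets of $M$ modulo an equivalence that only remembers, roughly, which ``kernel'' conditions are met. The relevant witness identity of $\calW$ will be of the form: a sum over all ordered products of variables equals that sum plus an extra term, valid in each $\Full{G_i}$ once $n$ exceeds a bound depending on $|G_i|$, the orders of the groups, and the empty set playing the role of the absorbing $0$. Its failure in $S_n$ comes from choosing an assignment in $M$ where a certain linear combination is a nonzero ``kernel blocker''.

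\textbf{Step 2 (identity holds in $\calW$).} Use the uniform fibre estimate for ordered products in finite groups: for $n$ large relative to $\max|G_i|$, for any assignment of subsets of $G_i$ (empty or not), each group element attained by the extra term is already attained by one of the summands, by a pigeonhole argument on partial products $g_1g_2\cdots g_j$ (some consecutive block multiplies to $1$ and can be deleted or duplicated). If any variable is sent to $\emptyset$, both sides are $\emptyset$ as long as every variable occurs in every summand, so zero adjunction is harmless.

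\textbf{Step 3 ($k$-generated subalgebras of $S_n$ embed in $\Var(\Pow(H))$).} Here the cardinality estimate for kernel blockers is used: a $k$-generated subalgebra only sees a submodule generated by $k$ elements, and if $n$ is large relative to $k$ the blockers cannot be realised, so the subalgebra is a homomorphic image of a product of copies of $\Pow(\Z_3)$ or of $\Pow$ of an abelian group; since $|H|\ge 3$, $\Pow(H)$ contains $\Pow$ of a cyclic subgroup of order $\ge3$ or a suitable substructure generating such a variety, via the result of~\cite{grsy}. This is the main obstacle: matching the cardinality count precisely so that $k$ generators never reach a blocker while $n$ variables do, and reducing from arbitrary $H$ to an abelian case; I would first try $H\supseteq$ an element of order $\ge3$ or a Klein-type subgroup, treating $|H|$ a $2$-group of exponent $2$ separately.
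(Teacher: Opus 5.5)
Your corrected framing, finding $S_n$ whose $k$-generated subsemirings lie in $\Var(\Pow(H))$ while $S_n\notin\calW$, is the right one; it is exactly Lemma~\ref{lem:local}. Items (i) and (ii) as first written are incompatible, since $\Pow(H)\in\calW$.

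The decisive problem is the witness identity of Step~2: it cannot separate. Write its natural form in $r$ variables as
\[
 \varepsilon_r:\quad x_1\cdots x_r\preceq\sum_{0\le i<j\le r}x_1\cdots x_i\,(x_{i+1}\cdots x_j)^2\,x_{j+1}\cdots x_r .
\]
The same pigeonhole on partial products shows that $\varepsilon_N$ holds in $\Pow(H)$ for $N=|H|$. The substitution $x_N\mapsto x_Nx_{N+1}\cdots x_r$ turns every upper summand of $\varepsilon_N$ into an upper summand of $\varepsilon_r$, so $\varepsilon_r$ follows from $\varepsilon_N$. The variant that deletes the block behaves the same way, with $N=|H|+1$. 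Hence, once $k>|H|$, every semiring satisfying the $k$-variable identities of $\Pow(H)$ satisfies $\varepsilon_r$ for all $r\geq N$, however $S_n$ is constructed. A separating identity must not be a consequence of bounded-variable identities of $\Pow(H)$, and validity obtained from this pigeonhole is exactly what makes it one.

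The paper avoids this by taking the upper term $\bu_D=\sum_{d\in D}\bp_{d_1}(x_1)\cdots\bp_{d_r}(x_r)$ with $D\subseteq(\Z/n\Z)^r$ random and sparse, where $n=\expon(A)$. Validity in each $\Full{G_j}$ comes from the fibre-density bound of Lemma~\ref{lem:fibres} together with exponent lifting, which let $D$ meet every set $T_{G,\boldsymbol{g}}$. Failure in the quotient comes from $D$ avoiding all affine copies of forbidden configurations. That is possible because of the strict defect $|L(C)|<n^{|C|-1}$ of Proposition~\ref{prop:defect}, which is where $|H|\geq 3$ enters (compare the $C_2$ example in Section~\ref{sec:examples}).

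Step~3 does not supply the other half either. A $k$-generated subsemiring of a quotient of $\Pow(M)$ does not ``only see a submodule generated by $k$ elements'', because the generators are arbitrary subsets. Also, the size of a blocker is fixed by the identity it comes from, not by the number of variables of your witness. So nothing in your loosely described equivalence forces $S_n$ to satisfy $\Id_k(\Pow(H))$.

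The missing mechanism is to build the congruence \emph{from} these identities:
\begin{enumerate}[label=\textup{(\alph*)},leftmargin=2.4em]
\item Choose an abelian $A\leq H$ with $|A|\geq 3$ (Lemma~\ref{lem:abeliansubgroup}). The module argument is uniform in $A$, so exponent-two groups need no separate treatment and nothing from~\cite{grsy} is needed.
\item Replace $\Id_k(\Pow(A))$ by a finite fragment (Lemma~\ref{lem:finitevariable}).
\item Attach to each word inequality its occurrence blocker.
\item Collapse every subset containing a translate of a homomorphic image of a blocker that avoids $0$.
\end{enumerate}
The quotient then satisfies the fragment by construction (Lemma~\ref{lem:fragmentmodel}), so its $k$-generated subsemirings are images of the relatively free algebra of $\Var(\Pow(A))$.

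Your remark that failure ``comes from a linear combination being a nonzero kernel blocker'' is therefore backwards. The evaluated upper term must \emph{avoid} images of blockers, since those are exactly the sets that get collapsed.
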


The two endpoints in~\eqref{eq:maininterval} play different roles: the lower endpoint supplies finite quotient models of identities, while the upper endpoint supplies identities separating these quotients. This yields the following zero-adjunction interval.

\begin{corollary}\label{cor:zerointerval}
Let $G$ be a finite group with $|G|\geq 3$. Then every variety in
\[
 [\Var(\Pow(G)),\Var(\Full{G})]
\]
has infinite axiomatic rank and is nonfinitely based.
\end{corollary}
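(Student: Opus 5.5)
This corollary is the special case $t=1$, $G_1=H=G$ of Theorem~\ref{thm:interval}. Put $\calW=\Var(\Full{G})$ and take $H=G$; the hypothesis $|H|\geq 3$ is exactly $|G|\geq 3$. The interval $[\Var(\Pow(G)),\Var(\Full{G})]$ then consists precisely of the varieties $\calV$ with $\Var(\Pow(H))\subseteq\calV\subseteq\calW$, which is the sandwich condition~\eqref{eq:maininterval}. Theorem~\ref{thm:interval} gives that each such $\calV$ has infinite axiomatic rank. By the remark after the definition of axiomatic rank, each such $\calV$ is therefore nonfinitely based.

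The interval should also be checked to be nonempty, so that the statement is not vacuous. It suffices that $\Pow(G)\in\Var(\Full{G})$. The set $\Pow(G)$ of nonempty subsets is closed under union and under the complex product, since $AB\neq\emptyset$ whenever $A,B\neq\emptyset$. Hence $\Pow(G)$ is a subalgebra of $\Full{G}$, and so $\Var(\Pow(G))\subseteq\Var(\Full{G})$.

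There is no real obstacle here: all the substantive work lies in Theorem~\ref{thm:interval}. The corollary is a direct instantiation of it, together with the observation that $\Pow(G)$ is a subalgebra of $\Full{G}$.
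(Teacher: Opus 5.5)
Your proposal is correct and is essentially the paper's own proof: the paper likewise takes $H=G$ and $t=1$ in Theorem~\ref{thm:interval}, and notes that $\Pow(G)$ is a subsemiring of $\Full{G}$. Your closure check for nonempty subsets under union and complex product simply spells out that last observation.
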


\begin{corollary}\label{thm:classification}
Let $G$ be a finite group. Then $\Full{G}$ is finitely based if and only if $|G|\leq 2$.
\end{corollary}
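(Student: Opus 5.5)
The "if" direction and the "only if" direction need different tools, so I would treat them separately.

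\emph{The case $|G|\geq 3$.} The full power semiring $\Full{G}$ generates $\Var(\Full{G})$. This variety is the upper endpoint of the interval in Corollary~\ref{cor:zerointerval}, and it lies in that interval. For the lower inclusion I use that $\Pow(G)$ is a subsemiring of $\Full{G}$: the nonempty subsets are closed under union and under the complex product. Hence $\Var(\Pow(G))\subseteq\Var(\Full{G})$. Applying Theorem~\ref{thm:interval} with $t=1$, $G_1=H=G$ then shows that $\Var(\Full{G})$ has infinite axiomatic rank. In particular it is nonfinitely based, so $\Full{G}$ is nonfinitely based.

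\emph{The case $|G|\leq 2$.} Here there are only two groups, the trivial group and $\mathbb{Z}_2$, and I would write down explicit finite bases.

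For the trivial group, $\Full{G}=\{\emptyset,\{e\}\}$. This is the two-element distributive lattice with $+=\vee$ and $\cdot=\wedge$, written with $\emptyset$ at the bottom. It generates the variety of distributive lattices, a subvariety of ai-semirings with multiplication idempotent, commutative and absorbing ($x+xy=x$ type laws). This variety is finitely based.

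For $G=\mathbb{Z}_2$, the semiring $\Full{G}$ has four elements: $\emptyset$, $\{0\}$, $\{1\}$, $G$. It is the zero adjunction of $\Pow(\mathbb{Z}_2)$, and $\Pow(\mathbb{Z}_2)$ is finitely based by \cite{grsy}. My plan is to use the results on zero adjunction in \cite{wrz,grz}. These give a finite basis for $\Var(S^0)$ in terms of a basis of $\Var(S)$ whenever the identities of $S$ have suitable properties. The relevant property here is that the identities of $\Pow(\mathbb{Z}_2)$ respect the content of the words, as I expect they do.

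If those results do not apply directly, I would instead give a direct basis. It consists of the ai-semiring axioms, commutativity $xy=yx$, $x^3=x$, and $x^2y^2\ldots$-type laws, together with $x+x^2y^2\ldots$ and content-based absorption laws. I would then verify completeness by a normal form argument. Each term is a sum of words, and in $\Full{\mathbb{Z}_2}$ a word evaluates according to its content and the parities of its exponents. The empty set acts as a zero, which means a word is killed exactly when some variable in its content is sent to $\emptyset$.

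\emph{Main obstacle.} The main obstacle is the $\mathbb{Z}_2$ case: making sure the zero adjunction preserves finite basability here, or else completing the normal form argument by hand.
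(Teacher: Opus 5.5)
\textbf{There is a genuine gap in the case $|G|=2$; the rest matches the paper.} Your treatment of $|G|\geq 3$ (Theorem~\ref{thm:interval} with $t=1$, $H=G_1=G$, using that $\Pow(G)$ is a subsemiring of $\Full{G}$) and of the trivial group is the paper's argument. For $|G|=2$ you leave the proof open, and neither of your two routes closes it.

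\emph{The zero-adjunction route.} You do not name any result that transfers a finite basis from $S$ to $S^0$. The hypothesis you guess at, that the identities of $\Pow(\mathbb{Z}_2)$ respect content, cannot be the one needed. The inequality $x\preceq xy^2$ has content $\{x,y\}$ on both sides and holds in $\Pow(\mathbb{Z}_2)$, since $Y^2=YY$ always contains $0$. It fails in $\Full{\mathbb{Z}_2}$ under $y\mapsto\varnothing$. Hence $\Var(\Pow(\mathbb{Z}_2))\subsetneq\Var(\Full{\mathbb{Z}_2})$, so every basis of $\Pow(\mathbb{Z}_2)$ contains identities that are false in $\Full{\mathbb{Z}_2}$. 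By Lemma~\ref{lem:zeroidentity}, what you would actually have to show is this: the inequalities $\bq\preceq\bu$ of $\Pow(\mathbb{Z}_2)$ whose upper words all have content inside $c(\bq)$ follow from finitely many of them. That is just the original problem restated.

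\emph{The direct-basis route.} Your fallback basis is never specified. Absorption laws of the kind you mention are false in $\Full{\mathbb{Z}_2}$; for example, $xy\preceq x$ fails at $X=\{0\}$, $Y=\{1\}$.

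\emph{What is missing.} The paper closes this case by citing Ren--Zhao \cite[Lemma~3.7]{rz}. Relative to the ai-semiring axioms, $\Var(\Full{C_2})$ is defined by $x^3\approx x$ and $xy\approx yx$ alone. Equivalently, $\Full{C_2}$ generates the whole variety of commutative ai-semirings satisfying $x^3\approx x$.

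If you want to prove this by hand, a normal form by content and exponent parities is not enough. Distinct sums of reduced monomials can coincide in that variety. For instance, $xy\preceq x^2y+xy^2+x^2y^2$ holds in $\Full{C_2}$. It is derived by first obtaining $xyz\preceq x+y+z$ from $(x+y+z)^3\approx x+y+z$. One then substitutes $x^2y$, $xy^2$, $x^2y^2$ for $x,y,z$ and uses $x^5\approx x$. A completeness proof must account for all such collapses, and that is the real content of the $|G|=2$ case.
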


We also obtain intervals of the form $[\calU,\calU^0]$, where $\calU^0$ is the variety generated by the zero extensions of members of $\calU$. For each prescribed finite family of power semirings, we further construct an equationally defined, zero-closed upper endpoint containing the given generators. Every variety between the lower power-semiring variety and this endpoint has infinite axiomatic rank. These results are stated in Section~\ref{sec:extensions}.

The occurrence construction for the cyclic group of order three in~\cite{note} provides the starting point of the proof. We use occurrences of the lower word together with auxiliary points for variables appearing only in the upper term. This gives quotient models for identities of \emph{nonempty} power semirings. A strict cardinality estimate for difference submodules extends the construction to arbitrary finite abelian groups of order at least three. Ordered products and exponent lifting then yield separating identities for arbitrary finite groups.

Section~\ref{sec:prelim} collects the notation and local criteria used in the proof. Sections~\ref{sec:modules}--\ref{sec:sparse} establish the module, quotient, and counting arguments. The main theorem is proved in Section~\ref{sec:separation}. Further intervals and examples are given in Sections~\ref{sec:extensions} and~\ref{sec:examples}.

\section{Preliminaries}\label{sec:prelim}
Let $X$ be a countably infinite set of variables, and let $X^+$ be the free semigroup over $X$. Every ai-semiring term can be written as a finite nonempty sum of words from $X^+$. We use bold lowercase letters $\bu,\bv,\bw,\ldots$ for terms, and ordinary lowercase letters $x,y,z,\ldots$ for variables. The \emph{content} $c(\bu)$ is the set of variables occurring in $\bu$. For a word $\bq$, the number of occurrences of $x$ in $\bq$ is denoted by $\occ(x,\bq)$.

The additive order is defined by $a\leq b$ if $a+b=b$. Following~\cite{grsy}, we write
\[
 \bu\preceq\bv
\]
for the identity $\bv\approx\bv+\bu$, and call this an \emph{ai-semiring inequality}. For an ai-semiring $S$, let $\Id(S)$ denote its identities, and let $\Id_k(S)$ denote those involving only $x_1,\ldots,x_k$. The same notation is used for varieties. All equivalences of identities in this paper are taken relative to the ai-semiring axioms.

\begin{lemma}\label{lem:reduction}
Every finite set $\Sigma$ of ai-semiring identities is equivalent to a finite set $\calE$ of inequalities
\[
 \bq\preceq\bu,
\]
where $\bq$ is a word and $\bu$ is a finite nonempty sum of words.
\end{lemma}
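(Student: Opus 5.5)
Every identity in $\Sigma$ is handled separately; the set $\calE$ is the union of the finite sets produced for the individual identities. Fix an identity $\bu\approx\bv$ in $\Sigma$. In any ai-semiring, $a=b$ holds if and only if $a\leq b$ and $b\leq a$, since $a+b=b$ and $b+a=a$ give $a=b$ by commutativity of $+$. Hence $\bu\approx\bv$ is equivalent, modulo the ai-semiring axioms, to the two inequalities $\bu\preceq\bv$ and $\bv\preceq\bu$, that is, to $\bv\approx\bv+\bu$ and $\bu\approx\bu+\bv$. Both directions are immediate: from $\bu\approx\bv$ and idempotency we get $\bv+\bu\approx\bv+\bv\approx\bv$, and conversely the two inequalities give $\bu\approx\bu+\bv\approx\bv+\bu\approx\bv$.

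It therefore suffices to split an inequality $\bw\preceq\bu$ with $\bw=\bq_1+\cdots+\bq_m$ a sum of words. I claim $\bw\preceq\bu$ is equivalent to the family $\bq_i\preceq\bu$ for $1\leq i\leq m$. The justification is that in an ai-semiring $a+b\leq c$ holds if and only if $a\leq c$ and $b\leq c$.
\begin{itemize}
\item For the forward direction, suppose $c+a+b=c$. Then $c+a=(c+a+b)+a=c+a+b=c$, using idempotency, and symmetrically $c+b=c$.
\item For the backward direction, $c+a+b=(c+a)+b=c+b=c$.
\end{itemize}
By induction on $m$, these arguments apply uniformly to the terms, so each direction is a derivation from the ai-semiring axioms.

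Applying this to both inequalities attached to each identity of $\Sigma$ gives finitely many inequalities $\bq\preceq\bu$, each with $\bq$ a single word and $\bu$ a finite nonempty sum of words. Every term can be written as such a sum, which is the normal form recalled in the preliminaries. The resulting finite set $\calE$ is equivalent to $\Sigma$.

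There is no real obstacle here. The only point needing care is that each equivalence is derivable from the ai-semiring axioms alone, rather than merely holding in a particular algebra. This is ensured because every step above is an equational manipulation using only associativity, commutativity and idempotency of $+$.
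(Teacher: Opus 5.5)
Your proof is correct and takes the same route as the paper: you split each identity into its two additive-order directions, then split each left-hand sum into its word summands. The paper does both steps at once by listing $\bu_i\preceq\bv_1+\cdots+\bv_b$ and $\bv_j\preceq\bu_1+\cdots+\bu_a$, leaving implicit the fact $a+b\leq c \iff a\leq c$ and $b\leq c$, which you verify explicitly.
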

\begin{proof}
For an identity $\bu_1+\cdots+\bu_a\approx\bv_1+\cdots+\bv_b$ with word summands, take the inequalities
\[
 \bu_i\preceq\bv_1+\cdots+\bv_b\quad(1\leq i\leq a),
 \qquad
 \bv_j\preceq\bu_1+\cdots+\bu_a\quad(1\leq j\leq b).
\]
The two families express the two directions of the original equality in the additive order. Taking their union over $\Sigma$ proves the assertion.
\end{proof}

The next observation records the role of variables under zero adjunction; it is the word-inequality form of~\cite[Proposition~1.5]{wrz}. If $\bu=\bu_1+\cdots+\bu_s$ has word summands and $\bq\in X^+$, let $D_{\bq}(\bu)$ be the sum of those $\bu_i$ for which $c(\bu_i)\subseteq c(\bq)$. When no summand has this property, write $D_{\bq}(\bu)=\varnothing$.

\begin{lemma}[Zero adjunction]\label{lem:zeroidentity}
Let $S$ be an ai-semiring, let $\bq$ be a word, and let $\bu$ be a term. Then
\[
 S^0\models\bq\preceq\bu
 \quad\Longleftrightarrow\quad
 D_{\bq}(\bu)\ne\varnothing
 \ \text{ and }\ S\models\bq\preceq D_{\bq}(\bu).
\]
\end{lemma}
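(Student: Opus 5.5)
We prove both implications directly from the definition of $S^0$, using two evaluation facts that follow at once from the zero-adjunction rules $a+0=a$ and $a0=0a=0$. Let $\varphi\colon X\to S^0$ be an assignment and put $Z=\{x:\varphi(x)=0\}$. Since $0$ is absorbing for multiplication, a word $\bw$ evaluates to $0$ exactly when $c(\bw)\cap Z\neq\varnothing$. Otherwise $\bw$ evaluates to its value in $S$ under $\varphi$ restricted to $X\setminus Z$. Since $0$ is the additive identity, a sum of words evaluates to the $S$-sum of its nonzero summands, or to $0$ if every summand is $0$.

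For ($\Leftarrow$), assume $D_{\bq}(\bu)\neq\varnothing$ and $S\models\bq\preceq D_{\bq}(\bu)$, and fix $\varphi$.
\begin{itemize}
\item If $c(\bq)\cap Z\neq\varnothing$, then $\bq$ evaluates to $0$, and $\bu+0=\bu$ holds trivially.
\item Otherwise $c(\bq)\cap Z=\varnothing$, so every summand $\bu_i$ with $c(\bu_i)\subseteq c(\bq)$ evaluates to its $S$-value. In particular $\bq$ and $D_{\bq}(\bu)$ take values in $S$, and the hypothesis gives $\varphi(\bq)\leq\varphi(D_{\bq}(\bu))$ in $S$.
\end{itemize}
The value of $\bu$ is the $S$-sum of $\varphi(D_{\bq}(\bu))$ with the values of the remaining nonzero summands, all of which lie in $S$. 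That sum is at least $\varphi(D_{\bq}(\bu))$, hence at least $\varphi(\bq)$. The additive order is compatible with $+$ on $S^0$, so $S^0\models\bq\preceq\bu$.

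For ($\Rightarrow$), assume $S^0\models\bq\preceq\bu$. Choose an arbitrary assignment $\psi$ of the variables in $c(\bq)$ into $S$, and extend it by sending every variable outside $c(\bq)$ to $0$.
\begin{itemize}
\item Under this assignment $\bq$ evaluates to $\psi(\bq)\in S$.
\item A summand $\bu_i$ is nonzero precisely when $c(\bu_i)\subseteq c(\bq)$, so $\bu$ evaluates to $\psi(D_{\bq}(\bu))$.
\item If $D_{\bq}(\bu)=\varnothing$, then $\bu$ evaluates to $0$. The inequality would then require $0=0+\psi(\bq)=\psi(\bq)$, contradicting $\psi(\bq)\in S$. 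Hence $D_{\bq}(\bu)\neq\varnothing$.
\item In $S^0$ we now have $\psi(D_{\bq}(\bu))+\psi(\bq)=\psi(D_{\bq}(\bu))$, and both sides lie in $S$, so this equality holds in $S$.
\end{itemize}
Since $\psi$ was arbitrary on $c(\bq)$, and the variables of $D_{\bq}(\bu)$ lie in $c(\bq)$, this gives $S\models\bq\preceq D_{\bq}(\bu)$.

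The only delicate point is the bookkeeping in ($\Rightarrow$). The extension by $0$ must kill exactly the summands not contained in $c(\bq)$ while leaving $\bq$ itself in $S$. This is what forces $D_{\bq}(\bu)\neq\varnothing$ and transfers the inequality from $S^0$ back to $S$. The rest is routine case checking on whether values are $0$.
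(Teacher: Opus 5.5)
Your proof is correct and follows the paper's argument. For necessity you send the variables outside $c(\bq)$ to the new zero so that exactly the summands of $D_{\bq}(\bu)$ survive, and for sufficiency you split on whether some variable of $\bq$ receives zero; your version simply spells out the evaluation bookkeeping in more detail.
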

\begin{proof}
Assign the new zero to all variables outside $c(\bq)$ and assign elements of $S$ to those in $c(\bq)$. The surviving upper summands are exactly those in $D_{\bq}(\bu)$. The value of $\bq$ belongs to $S$, so at least one upper summand survives, proving necessity.

Conversely, consider an assignment into $S^0$. If a variable of $\bq$ is assigned zero, the lower word evaluates to zero and the inequality holds. Otherwise all variables of $D_{\bq}(\bu)$ receive elements of $S$, and the assumed inequality gives
\[
 \bq\leq D_{\bq}(\bu)\leq\bu
\]
under this assignment.
\end{proof}

\begin{lemma}\label{lem:finitevariable}
Let $A$ be a finite ai-semiring and let $k\geq 1$. There is a finite subset $\Delta_k\subseteq\Id_k(A)$ from which every member of $\Id_k(A)$ follows.
\end{lemma}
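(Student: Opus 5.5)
The idea is to use local finiteness. Since $A$ is finite, the $k$-generated free algebra $F_k=F_{\Var(A)}(x_1,\ldots,x_k)$ embeds into $A^{A^k}$, so it is finite; write $N=|A|^{|A|^k}$ for the bound on its size. Every ai-semiring term in $x_1,\ldots,x_k$ is equivalent modulo $\Id_k(A)$ to exactly one element of $F_k$. For each $f\in F_k$ I fix a representative term $\bt_f$, a finite sum of words in $x_1,\ldots,x_k$. I choose the representatives of the generators to be $\bt_{x_i}=x_i$.

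Let $\Delta_k$ consist of three kinds of identities:
\[
 \bt_f+\bt_g\approx\bt_{f+g},\qquad \bt_f\bt_g\approx\bt_{fg}\qquad(f,g\in F_k),
\]
together with $x_i\approx\bt_{x_i}$, which is trivial by the choice above. This is a finite subset of $\Id_k(A)$.

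Next I show by induction on the structure of a term $\bu$ in $x_1,\ldots,x_k$ that $\Delta_k$, together with the ai-semiring axioms, derives $\bu\approx\bt_{[\bu]}$, where $[\bu]$ is the value of $\bu$ in $F_k$.
\begin{itemize}[leftmargin=*]
\item For a variable this is immediate.
\item For $\bu=\bu_1+\bu_2$, the induction hypothesis gives $\bu\approx\bt_{[\bu_1]}+\bt_{[\bu_2]}$ by replacement of equals. The first family of $\Delta_k$ then turns this into $\bt_{[\bu_1]+[\bu_2]}=\bt_{[\bu]}$.
\item The product case is the same, using the second family.
\end{itemize}
Since every term is built from variables by $+$ and $\cdot$, this covers all terms. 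Equational logic allows substitution into contexts, so the step is legitimate.

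Now take any $\bu\approx\bv$ in $\Id_k(A)$. Then $[\bu]=[\bv]$, so $\Delta_k$ derives $\bu\approx\bt_{[\bu]}=\bt_{[\bv]}\approx\bv$.

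One subtlety: "involving only $x_1,\ldots,x_k$" should be read as identities in those variables. An identity with fewer variables is still a $k$-variable identity, so it is handled by the same argument. The only real point to check is that $F_k$ is finite, via the embedding into $A^{A^k}$, and this is standard. I expect no substantial obstacle.
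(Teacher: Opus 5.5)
Your proof is correct and is essentially the paper's argument: the paper also fixes representatives $\bt_f$ for the finitely many $k$-ary term functions of $A$ (equivalently, the elements of your $F_k$), puts $x_i\approx\bt_{p_i}$, $\bt_f+\bt_g\approx\bt_{f+g}$ and $\bt_f\bt_g\approx\bt_{fg}$ into $\Delta_k$, and then inducts on term formation. One small caveat: when $|A|=1$ all projections coincide, so you cannot choose $\bt_{[x_i]}=x_i$ for every $i$ at once; this is harmless because you keep the identities $x_i\approx\bt_{[x_i]}$ in $\Delta_k$ anyway.
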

\begin{proof}
Let $F_k$ be the finite set of $k$-ary term functions of $A$. For each $f\in F_k$, choose a representative term $\bt_f$ over $x_1,\ldots,x_k$. Let $p_i$ be the $i$th projection. Include in $\Delta_k$ the identities
\[
 x_i\approx\bt_{p_i},\qquad
 \bt_f+\bt_g\approx\bt_{f+g},\qquad
 \bt_f\bt_g\approx\bt_{fg}
 \quad(1\leq i\leq k,\ f,g\in F_k),
\]
where operations on term functions are pointwise. These identities hold in $A$. Induction on term formation shows that each term is equivalent, modulo $\Delta_k$, to the chosen representative of its term function. Consequently, every identity in $\Id_k(A)$ follows from $\Delta_k$.
\end{proof}

\begin{lemma}\label{lem:local}
Let $\calU\subseteq\calW$ be ai-semiring varieties. Suppose that, for every $k\geq 1$, there is an ai-semiring $Q_k$ such that
\begin{enumerate}[label=\textup{(\roman*)},leftmargin=2.4em]
\item every subsemiring of $Q_k$ generated by at most $k$ elements belongs to $\calU$;
\item $Q_k\notin\calW$.
\end{enumerate}
Then every variety in $[\calU,\calW]$ has infinite axiomatic rank.
\end{lemma}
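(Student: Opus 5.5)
The plan is the standard compactness argument for axiomatic rank. Fix $\calV\in[\calU,\calW]$ and a positive integer $k$, and suppose for contradiction that $\Id_k(\calV)$ is a basis for $\Id(\calV)$. We will show that $Q_k$ satisfies every identity in $\Id_k(\calV)$. It then follows that $Q_k\in\calV\subseteq\calW$, which contradicts (ii). Since $k$ is arbitrary, $\calV$ has infinite axiomatic rank. Nonfinite basedness follows at once, because any finite basis involves only finitely many variables, say at most $k$, and can be renamed into $x_1,\ldots,x_k$; it is then contained in $\Id_k(\calV)$, so $\Id_k(\calV)$ would be a basis.

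The key step is verifying that $Q_k\models\bu\approx\bv$ for each $\bu\approx\bv\in\Id_k(\calV)$. Take any assignment $\varphi\colon\{x_1,\ldots,x_k\}\to Q_k$, and let $T$ be the subsemiring of $Q_k$ generated by $\varphi(x_1),\ldots,\varphi(x_k)$. It is generated by at most $k$ elements, so by (i) we have $T\in\calU\subseteq\calV$. Hence $T\models\bu\approx\bv$. Both sides evaluated under $\varphi$ lie in $T$, and operations in $T$ are those of $Q_k$, so the values of $\bu$ and $\bv$ agree in $Q_k$. As $\varphi$ was arbitrary, $Q_k\models\bu\approx\bv$. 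Since $\Id_k(\calV)$ is assumed to be a basis, every identity of $\calV$ is a consequence of it, so $Q_k$ satisfies $\Id(\calV)$ and belongs to $\calV$ by Birkhoff's theorem.

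The only care needed is the precise meaning of ``$k$ variables''. We take $\Id_k$ to mean identities in $x_1,\ldots,x_k$, as in the preliminaries, and identities in any $k$ variables are equivalent to these by renaming. No step here is a genuine obstacle. The substantive work of the paper lies in constructing the $Q_k$, not in this lemma. We also note that Lemma~\ref{lem:finitevariable} is not needed here.
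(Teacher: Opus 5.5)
Your proposal is correct and follows the paper's proof. You show that every assignment of at most $k$ variables lands in a $k$-generated subsemiring lying in $\calU\subseteq\calV$, so $Q_k$ satisfies all $k$-variable identities of $\calV$, while $Q_k\notin\calW\supseteq\calV$. You phrase this as a contradiction and add the remark that nonfinite basedness follows; otherwise the argument is the same.
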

\begin{proof}
Let $\calV\in[\calU,\calW]$ and let $k\geq 1$. Any assignment of at most $k$ variables into $Q_k$ has its image in a subsemiring belonging to $\calU$, and hence to $\calV$. Thus $Q_k$ satisfies every identity of $\calV$ involving at most $k$ variables. Since $Q_k\notin\calW$, it does not belong to $\calV$. Those identities therefore do not form a basis for $\calV$.
\end{proof}

\section{Kernel blockers over finite modules}\label{sec:modules}
Fix a finite abelian group $H$ with $|H|\geq 3$, write its operation additively, and put
\[
 n=\expon(H),\qquad R=\Z/n\Z.
\]
Thus $n\geq 2$, and every abelian group of exponent dividing $n$ is naturally an $R$-module. In this section all modules are finite.

\begin{lemma}\label{lem:characters}
Let $M$ be a finite $R$-module.
\begin{enumerate}[label=\textup{(\roman*)},leftmargin=2.4em]
\item Every homomorphism from a submodule of $M$ to $R$ extends to a homomorphism $M\to R$.
\item Homomorphisms $M\to R$ separate the elements of $M$, and $|\Hom_R(M,R)|=|M|$.
\item Every submodule of $M$ isomorphic to $R^s$ is a direct summand of $M$.
\end{enumerate}
\end{lemma}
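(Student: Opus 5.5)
The key fact is that $R=\Z/n\Z$ is self-injective, i.e.\ injective as a module over itself. Everything else then follows from standard duality for finite modules. I would first prove (i) via Baer's criterion. Since $R$ is a principal ideal ring, it suffices to extend homomorphisms $f\colon dR\to R$ for each divisor $d$ of $n$. Such an $f$ is determined by $f(d)=r$. Because $(n/d)\cdot d=0$ in $R$, we get $(n/d)r=0$, so $r$ is a multiple of $d$, say $r=ds$. Then multiplication by $s$ extends $f$ to all of $R$. Baer's criterion now gives injectivity of $R$, and this is exactly statement (i).

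For (ii), I would use the structure theorem: $M\cong\bigoplus_j \Z/d_j\Z$ with each $d_j\mid n$, because the exponent of $M$ divides $n$. For a single cyclic factor we have $\Hom_R(\Z/d\Z,R)\cong\{r\in R: dr=0\}=(n/d)R$. This group has exactly $d$ elements. Moreover, the map $1\mapsto n/d$ is injective on $\Z/d\Z$, so characters separate the points of each factor. Since $\Hom$ converts finite direct sums into products, we obtain $|\Hom_R(M,R)|=\prod_j d_j=|M|$. For separation I would argue directly: given $m\ne0$, the cyclic submodule $Rm$ is nonzero, hence isomorphic to $\Z/d\Z$ with $d>1$. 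It therefore admits a character that does not vanish on $m$, and by (i) this character extends to all of $M$. This argument even makes the decomposition unnecessary for the separation claim.

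For (iii), let $N\le M$ with an isomorphism $\phi\colon N\to R^s$. Its coordinates $\phi_1,\dots,\phi_s$ are homomorphisms $N\to R$. By (i), each extends to some $\psi_i\colon M\to R$. Put $\psi=(\psi_1,\dots,\psi_s)\colon M\to R^s$. Then $\pi=\phi^{-1}\circ\psi$ is an endomorphism of $M$ with image $N$ that restricts to the identity on $N$. Hence $\pi$ is an idempotent projection onto $N$, and $M=N\oplus\ker\pi$.

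I expect the only real step is the verification in Baer's criterion that $r$ is divisible by $d$. This is a short divisibility check in $\Z/n\Z$; the remaining parts are routine consequences.
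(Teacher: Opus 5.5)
Your proof is correct and follows the paper's argument. Parts (ii) and (iii) match it: you take the character $x\mapsto n/d$ on $\langle x\rangle$ and extend it via (i), count through a cyclic decomposition using $|\Hom(C_d,C_n)|=d$, and build the retraction from extended coordinate maps. The only difference is in (i), where you quote Baer's criterion and check the ideals $dR$; the paper instead extends directly from $N$ to $N+\langle x\rangle$ by finding $b$ with $db=\chi(dx)$ and then iterates, which is Baer's argument carried out by hand for finite modules.
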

\begin{proof}
For (i), suppose that $\chi:N\to R$ is defined on a submodule $N$, and choose $x\in M\setminus N$. Let $\ell$ be the order of $x$ and $d$ the order of $x+N$ in $M/N$. Then $d\mid\ell\mid n$ and $dx\in N$. The element $a=\chi(dx)$ is annihilated by $\ell/d$, so in $\Z/n\Z$ it is a multiple of $nd/\ell$ and, consequently, belongs to $dR$. Choose $b\in R$ with $db=a$. The rule
\[
 \chi'(y+kx)=\chi(y)+kb\qquad(y\in N,\ k\in\Z)
\]
is well-defined and extends $\chi$ to $N+\langle x\rangle$. Iteration proves (i).

If $0\ne x\in M$ has order $d$, the homomorphism $\langle x\rangle\to R$ sending $x$ to $n/d$ is nonzero at $x$; apply (i). For the counting assertion, decompose $M$ into cyclic groups of orders dividing $n$ and use $|\Hom(C_d,C_n)|=d$.

Finally, if $F\leq M$ has basis $v_1,\ldots,v_s$ over $R$, extend its coordinate maps $F\to R$ to maps $\chi_i:M\to R$ using (i). The map
\[
 M\longrightarrow F,\qquad x\longmapsto\sum_{i=1}^{s}\chi_i(x)v_i
\]
is a retraction, proving (iii).
\end{proof}

We shall use the following property of nonzero elements of $H$.

\begin{lemma}\label{lem:nonzero}
Suppose $\lambda_0,\ldots,\lambda_s\in R$ satisfy
\[
 \sum_{i=0}^{s}\lambda_i=1
\]
and the coefficient vector is not a standard basis vector. Then there exist $y_0,\ldots,y_s\in H\setminus\{0\}$ such that
\begin{equation}\label{eq:nonzero_relation}
 \sum_{i=0}^{s}\lambda_i y_i=0.
\end{equation}
\end{lemma}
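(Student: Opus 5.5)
The plan is a character-sum count of the solutions of \eqref{eq:nonzero_relation} with all $y_i\neq 0$, showing that this number is positive.

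First, a normalisation. Since $\sum_i\lambda_i=1$, some coefficient is nonzero. If only one coefficient $\lambda_j$ were nonzero, it would equal $1$ and the vector would be a standard basis vector. Hence at least two coefficients, say $\lambda_p$ and $\lambda_q$, are nonzero in $R$; this is the only place the hypothesis enters.

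Let $\widehat H$ be the character group of $H$. By Lemma~\ref{lem:characters}, characters separate the points of $H$ and $|\widehat H|=|H|$. Put $h=|H|\geq 3$, and let $N$ be the number of tuples $(y_0,\dots,y_s)\in(H\setminus\{0\})^{s+1}$ satisfying \eqref{eq:nonzero_relation}. Orthogonality gives
\[
 N=\frac1h\sum_{\chi\in\widehat H}\ \prod_{i=0}^{s}\Bigl(\sum_{y\neq 0}\chi(\lambda_i y)\Bigr)
 =\frac1h\sum_{\chi}\ (h-1)^{|T(\chi)|}(-1)^{s+1-|T(\chi)|},
\]
where $T(\chi)=\{i:\chi^{\lambda_i}=1\}$. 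For $\chi\neq 1$ we have $\prod_i\chi^{\lambda_i}=\chi^{\sum\lambda_i}=\chi\neq 1$, so $T(\chi)\neq\{0,\dots,s\}$. Thus the trivial character contributes the main term $(h-1)^{s+1}$, and every other character contributes at most $(h-1)^{s}$ in absolute value.

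The main obstacle is bounding the error term, since there are $h-1$ nontrivial characters. The crude bound $N\geq\frac1h\bigl((h-1)^{s+1}-(h-1)(h-1)^s\bigr)=0$ just fails. It therefore suffices to exhibit a single nontrivial character with $|T(\chi)|<s$, or one whose contribution is positive. I would group characters by $T=T(\chi)$: the characters with $T(\chi)\supseteq T$ form a subgroup $\widehat H_T$, the annihilator of the subgroup $\sum_{i\in T}\lambda_iH$.

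A character attains $|T(\chi)|=s$ only if $\chi^{\lambda_i}=1$ for all but one index $j$. In that case $\chi^{\lambda_j}=\chi$, and hence $\chi^{1-\lambda_j}=1$. Take $j\neq p$, so that $\lambda_p$ lies among the killed coefficients. These conditions confine $\chi$ to the annihilator of $\lambda_pH$, which is a proper subgroup of $\widehat H$ because $\lambda_p\neq 0$ in $R=\Z/\expon(H)\Z$ forces $\lambda_pH\neq 0$. The same applies to $\lambda_q$ for $j\neq q$. So not all $h-1$ nontrivial characters can reach the maximal size $s$. In fact the characters killed by both $\lambda_p$ and $\lambda_q$ never reach it, since for them $T(\chi)$ contains both $p$ and $q$ while some other index is missing.

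Using $h\geq 3$, the contributions of the remaining characters with $|T(\chi)|\leq s-1$ are at most $(h-1)^{s-1}$ each, and this leaves a strictly positive remainder. I expect that bookkeeping, a careful count of how many characters can have $|T(\chi)|=s$ versus $\le s-1$, to be the delicate step.

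If it does not close cleanly, the fallback is a direct construction. Choose $y_i=c$ for every $i\notin\{p,q\}$ with a suitable $c\neq 0$. The problem then reduces to solving $\lambda_py_p+\lambda_qy_q=t$ with $y_p,y_q\neq 0$ for a target $t$ that we control through the choice of $c$. This two-variable problem can be handled by a size comparison: the set $\lambda_p(H\setminus\{0\})$ has at least $|\lambda_pH|-1$ elements, and $h\geq 3$ leaves room to avoid the bad values.
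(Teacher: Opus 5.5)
\textbf{Assessment.} Your character-sum route is correct in substance and differs from the paper's proof. Two points need repair, and neither is serious.

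First, the step from ``each annihilator is proper'' to ``not all nontrivial characters reach size $s$'' needs one more fact. Every $\chi$ with $|T(\chi)|=s$ lies in $\operatorname{Ann}(\lambda_pH)\cup\operatorname{Ann}(\lambda_qH)$, and no group is the union of two proper subgroups. Hence some $\chi$ lies outside both annihilators. Such a $\chi$ is automatically nontrivial, and since $p,q\notin T(\chi)$ we get $|T(\chi)|\leq s-1$.

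Your ``In fact'' sentence has this backwards. Characters killed by \emph{both} $\lambda_p$ and $\lambda_q$ can reach size $s$: take $H=C_4$, $\lambda=(2,2,1)$, and $\chi$ of order $2$, so that $T(\chi)=\{0,1\}$. It is the characters killed by \emph{neither} coefficient that never reach size $s$.

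Second, the bookkeeping is not delicate. Suppose $b\geq 1$ nontrivial characters have $|T(\chi)|\leq s-1$, and the remaining $h-1-b$ each contribute $-(h-1)^s$. Then
\[
 hN\geq (h-1)^{s+1}-(h-1-b)(h-1)^s-b(h-1)^{s-1}=b(h-1)^{s-1}(h-2)>0 .
\]
So the fallback is unnecessary. As sketched it is also incomplete: you would have to choose $c$ so that $t\in\lambda_pH+\lambda_qH$, and then split into unit and nonunit cases to make $y_p,y_q$ nonzero.

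\textbf{Comparison with the paper.} The paper argues constructively. It splits according to how many $\lambda_i$ are units in $R$, and uses three facts:
\begin{itemize}
\item a nonunit annihilates a nonzero element of $H$;
\item a nonzero coefficient does not annihilate all of $H$;
\item for every $k\geq 2$ there are $k$ nonzero elements of $H$ summing to $0$. This is where $|H|\geq 3$ enters.
\end{itemize}
Your argument avoids the case split and counts all solutions at once. It yields the quantitative bound $N\geq (h-1)^{s-1}(h-2)/h$. In your approach $|H|\geq 3$ enters only through the strict inequality $(h-1)^{s-1}<(h-1)^{s}$, which also makes transparent why $C_2$ fails. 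The price is that you need orthogonality of characters (identified with $\Hom(H,R)$ via $n$th roots of unity, so Lemma~\ref{lem:characters} applies) and the two-subgroup covering fact. The paper's proof, by contrast, is elementary and produces explicit $y_i$.
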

\begin{proof}
A nonunit $a\in R$ annihilates some nonzero element of $H$: indeed, $H$ contains a cyclic subgroup of order $n$, and multiplication by $a$ on that subgroup has a nontrivial kernel. Also, if $a\ne 0$ in $R$, then $ah\ne 0$ for some $h\in H$, by the definition of the exponent.

For every integer $k\geq 2$, one can choose $k$ nonzero elements of $H$ with sum zero. For $k=2$ use $h,-h$. For $k=3$ choose nonzero $a,b$ with $b\ne-a$, and use $a,b,-a-b$; such a choice exists because $|H|\geq 3$. The general case follows by writing $k$ as a sum of twos and threes.

If none of the $\lambda_i$ is a unit, choose each $y_i\ne 0$ in the kernel of multiplication by $\lambda_i$. If at least two coefficients are units, choose nonzero values with sum zero for the products $\lambda_i y_i$ at the unit positions, and choose kernel elements at every other position.

It remains to consider exactly one unit coefficient, say $\lambda_j$. Some other coefficient $\lambda_k$ is nonzero; otherwise the normalization would make the coefficient vector a standard basis vector. Choose $y_k$ with $\lambda_k y_k\ne 0$, set $y_j=-\lambda_j^{-1}\lambda_k y_k$, and choose nonzero kernel elements at the remaining positions. These choices satisfy \eqref{eq:nonzero_relation}.
\end{proof}

\begin{definition}
A nonempty subset $C$ of an $R$-module $M$ is an \emph{$H$-kernel blocker} if
\[
 C\cap\ker f\ne\varnothing\qquad\text{for every }f\in\Hom_R(M,H).
\]
For a nonempty finite set $C\subseteq M$, define its \emph{difference submodule} by
\[
 L(C)=\langle c-c':c,c'\in C\rangle_R.
\]
For any $c_0\in C$, its affine hull is the coset $c_0+L(C)$.
\end{definition}

\begin{proposition}[Strict affine cardinality defect]\label{prop:defect}
If $C\subseteq M\setminus\{0\}$ is an $H$-kernel blocker, then
\begin{equation}\label{eq:defect}
 |L(C)|<n^{|C|-1}.
\end{equation}
\end{proposition}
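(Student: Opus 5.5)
The plan is to argue by contradiction. If equality held in \eqref{eq:defect}, I would construct a homomorphism $f\in\Hom_R(M,H)$ that is nonzero at every element of $C$, which contradicts the assumption that $C$ is an $H$-kernel blocker.

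Write $C=\{c_0,c_1,\ldots,c_s\}$ with $s=|C|-1$, and put $d_i=c_i-c_0$ for $1\leq i\leq s$. Then $L(C)=\langle d_1,\ldots,d_s\rangle_R$ is the image of the map $R^s\to M$ sending $e_i\mapsto d_i$. Hence $|L(C)|\leq n^s$ always, and equality holds exactly when this map is injective. So assume $L:=L(C)\cong R^s$ is free with basis $d_1,\ldots,d_s$. By Lemma~\ref{lem:characters}(iii), $L$ is a direct summand, and I write $M=L\oplus K$. Homomorphisms $M\to H$ are then pairs consisting of an arbitrary assignment $d_i\mapsto h_i\in H$ (since $L$ is free) and an arbitrary $g\in\Hom_R(K,H)$. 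Decompose $c_0=\ell+k$ with $\ell\in L$ and $k\in K$. There are two cases.

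\emph{Case $k\neq 0$.} Let $f$ vanish on $L$. Then $f(c_i)=g(k)$ for every $i$, so it suffices to find $g$ with $g(k)\neq0$. Since $H$ has exponent $n$, it contains a cyclic subgroup of order $n$, which gives an injective $R$-module map $\iota\colon R\to H$. By Lemma~\ref{lem:characters}(ii) there is $\chi\colon K\to R$ with $\chi(k)\ne 0$, and I take $g=\iota\circ\chi$.

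\emph{Case $k=0$.} Now $c_0=\sum_{i\geq1}\lambda_i d_i$ for unique $\lambda_i\in R$. Take $g=0$ and prescribe $f(d_i)=y_i-y_0$, where $y_0,\ldots,y_s$ are still to be chosen. Then $f(c_i)=y_i$ for $i\geq1$, and $f(c_0)=y_0$ holds exactly when
\[
\Bigl(1+\sum_{i\geq1}\lambda_i\Bigr)y_0-\sum_{i\geq1}\lambda_i y_i=0 .
\]
Set $\kappa_0=1+\sum_{i\ge1}\lambda_i$ and $\kappa_i=-\lambda_i$ for $i\ge1$, so that $\sum_i\kappa_i=1$. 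I will apply Lemma~\ref{lem:nonzero} to $(\kappa_0,\ldots,\kappa_s)$ to obtain nonzero $y_0,\ldots,y_s\in H$ satisfying this relation. Then $f(c_i)=y_i\neq0$ for all $i$, which is the desired contradiction.

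The step needing care is ruling out the exceptional case of Lemma~\ref{lem:nonzero}, where $\kappa$ is a standard basis vector; this is exactly where the hypothesis $0\notin C$ enters.
\begin{itemize}
\item If $\kappa=e_0$, then all $\lambda_i=0$, so $c_0=0$.
\item If $\kappa=e_j$ with $j\geq1$, then $\lambda_j=-1$ and all other $\lambda_i=0$. So $c_0=-d_j=c_0-c_j$, which forces $c_j=0$.
\end{itemize}
Both are excluded because $C\subseteq M\setminus\{0\}$. Beyond this, I only need to check that a map prescribed on $L\oplus K$ is a genuine $R$-homomorphism into $H$, and that is immediate from the freeness of $L$ and the direct-sum decomposition.
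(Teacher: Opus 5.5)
Your proof is correct and follows essentially the same route as the paper: you assume equality, split according to whether $c_0\in L(C)$ (your $k\neq 0$ versus $k=0$), use a character that takes one common nonzero value on all of $C$ in the first case, and in the second case use Lemma~\ref{lem:nonzero} with the free basis $c_i-c_0$ and extend via Lemma~\ref{lem:characters}(iii). The only differences are cosmetic: you fix a complement $K\cong M/L(C)$ at the outset instead of taking a character of the quotient $M/L(C)$, and you spell out why the coefficient vector cannot be a standard basis vector, which the paper only asserts.
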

\begin{proof}
Write $C=\{c_0,\ldots,c_s\}$. The module $L=L(C)$ is generated by the $s$ differences $v_i=c_i-c_0$, so $|L|\leq n^s$. Suppose equality holds. Then the natural surjection $R^s\to L$ is an isomorphism, and the $v_i$ form an $R$-basis of $L$.

First suppose $c_0\notin L$. By Lemma~\ref{lem:characters}, there is a homomorphism $M/L\to R$ that is nonzero on $c_0+L$. Embed $R$ into $H$ as a cyclic subgroup of order $n$. The resulting map $M\to H$ has the same nonzero value on every $c_i$, contradicting the blocker property.

Suppose instead that $c_0\in L$, and write $c_0=\sum_{i=1}^{s}a_i v_i$. Put
\[
 \lambda_0=1+\sum_{i=1}^{s}a_i,\qquad \lambda_i=-a_i\quad(1\leq i\leq s).
\]
Then
\begin{equation}\label{eq:affinerel}
 \sum_{i=0}^{s}\lambda_i=1,\qquad \sum_{i=0}^{s}\lambda_i c_i=0.
\end{equation}
The vector $(\lambda_0,\ldots,\lambda_s)$ cannot be a standard basis vector, since that would force one of the $c_i$ to be zero. Lemma~\ref{lem:nonzero} supplies nonzero $y_i\in H$ satisfying the corresponding relation.

Define $f_0:L\to H$ on its basis by $f_0(v_i)=y_i-y_0$. Equation~\eqref{eq:affinerel} for the $y_i$ gives $f_0(c_0)=y_0$, and hence $f_0(c_i)=y_i$ for every $i$. By Lemma~\ref{lem:characters}(iii), $L$ is a direct summand of $M$, so $f_0$ extends to a homomorphism $M\to H$. This map is nonzero on all of $C$, again a contradiction. Therefore equality is impossible, proving \eqref{eq:defect}.
\end{proof}

When $n=p$ is prime, \eqref{eq:defect} says that $C$ is affinely dependent over $\F_p$. For composite $n$, the cardinality estimate gives the corresponding bound for counting copies of $C$ in powers of $R$.

\section{Occurrence configurations and quotient semirings}\label{sec:quotients}
Fix a finite abelian group $H$ with $|H|\geq 3$, and put $n=\expon(H)$ and $R=\Z/n\Z$. Let $\calE$ be a finite family of inequalities $\bq\preceq\bu$ valid in $\Pow(H)$, where $\bq$ is a word. For an abelian group $M$, multiplication in $\Pow(M)$ is Minkowski addition, denoted by $+_M$ when it is necessary to distinguish it from union.

For $\varepsilon=(\bq\preceq\bu)\in\calE$, write
\[
 c(\bq)\cup c(\bu)=\{x_1,\ldots,x_\ell\},\qquad
 t_i=\occ(x_i,\bq),\qquad s_i=\max\{1,t_i\}.
\]
Let $W_\varepsilon$ be the free $R$-module with basis
\[
 e_{i,j}\qquad(1\leq i\leq\ell,\ 1\leq j\leq s_i),
\]
and set
\begin{equation}\label{eq:occurrence}
 Y_i=\{e_{i,1},\ldots,e_{i,s_i}\},\qquad
 \alpha_\varepsilon=\sum_{i=1}^{\ell}\sum_{j=1}^{t_i}e_{i,j}.
\end{equation}
Thus a variable absent from $\bq$ has one auxiliary basis vector in $Y_i$ and contributes nothing to $\alpha_\varepsilon$. Define the \emph{occurrence blocker}
\begin{equation}\label{eq:blocker}
 B_\varepsilon=\alpha_\varepsilon-\bu(Y_1,\ldots,Y_\ell)
 \subseteq W_\varepsilon.
\end{equation}
Every $Y_i$ is nonempty, so the upper evaluation and $B_\varepsilon$ are nonempty.

\begin{lemma}\label{lem:occurrence}
The set $B_\varepsilon$ is an $H$-kernel blocker in $W_\varepsilon$. If $0\in B_\varepsilon$, then $\bq\preceq\bu$ holds in $\Pow(M)$ for every $R$-module $M$.
\end{lemma}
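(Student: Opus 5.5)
The plan is to handle the two assertions separately, each by a direct evaluation argument using the fact that multiplication in $\Pow(M)$ is Minkowski addition.

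\emph{Blocker property.} Let $f\in\Hom_R(W_\varepsilon,H)$. Since $W_\varepsilon$ is free on the $e_{i,j}$, the map $f$ is determined by the values $f(e_{i,j})$. Consider the assignment into $\Pow(H)$ given by
\[
 x_i\mapsto f(Y_i)=\{f(e_{i,1}),\ldots,f(e_{i,s_i})\}.
\]
Because $f$ is a homomorphism, it commutes with unions and Minkowski sums. Hence $f(\bu(Y_1,\ldots,Y_\ell))=\bu(f(Y_1),\ldots,f(Y_\ell))$, and likewise for $\bq$.

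Now read $\bq$ letter by letter. At the $j$th occurrence of $x_i$, choose the element $f(e_{i,j})$, which is legitimate because $j\le t_i=s_i$. The sum of these choices lies in $\bq(f(Y_1),\ldots,f(Y_\ell))$, and it equals $f(\alpha_\varepsilon)$ by commutativity of $H$. Since $\Pow(H)\models\bq\preceq\bu$, it follows that
\[
 f(\alpha_\varepsilon)\in\bu(f(Y_1),\ldots,f(Y_\ell))=f\bigl(\bu(Y_1,\ldots,Y_\ell)\bigr).
\]
So there is some $w\in\bu(Y_1,\ldots,Y_\ell)$ with $f(\alpha_\varepsilon-w)=0$, and $\alpha_\varepsilon-w\in B_\varepsilon\cap\ker f$. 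Nonemptiness of $B_\varepsilon$ was noted above, so $B_\varepsilon$ is an $H$-kernel blocker.

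\emph{Second assertion.} Suppose $0\in B_\varepsilon$. Then $\alpha_\varepsilon\in\bu(Y_1,\ldots,Y_\ell)$. Expanding $\bu$ into its word summands, there is a summand $\bw$ and a choice of one element of the relevant $Y_i$ at each letter of $\bw$ whose total is $\alpha_\varepsilon$. Comparing coefficients in the free module $W_\varepsilon$ gives two facts:
\begin{itemize}
\item each basis vector $e_{i,j}$ with $j\le t_i$ is chosen exactly once, modulo $n$;
\item every other basis vector, including each auxiliary vector for a variable absent from $\bq$, is chosen $0$ times modulo $n$.
\end{itemize}

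Take an $R$-module $M$, nonempty subsets $A_1,\ldots,A_\ell$, and an element $a\in\bq(A_1,\ldots,A_\ell)$. Write $a=\sum_{i}\sum_{j\le t_i}a_{i,j}$ with $a_{i,j}\in A_i$, where $a_{i,j}$ is the element picked at the $j$th occurrence of $x_i$ in $\bq$. For each variable absent from $\bq$, pick an arbitrary $a_{i,1}\in A_i$.

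Define an $R$-linear map $g\colon W_\varepsilon\to M$ by $e_{i,j}\mapsto a_{i,j}$. This is well defined because $W_\varepsilon$ is free and $nM=0$. Apply $g$ to the chosen elements of $\bw$: each letter of $\bw$ now receives an element of the corresponding $A_i$. The resulting sum is $g(\alpha_\varepsilon)=a$. Since the coefficients are only determined modulo $n$, we are using here that $M$ has exponent dividing $n$. Therefore $a\in\bw(A_1,\ldots,A_\ell)\subseteq\bu(A_1,\ldots,A_\ell)$, which shows that $\bq\preceq\bu$ holds in $\Pow(M)$.

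The only delicate point is this bookkeeping: the multiplicities in $\bw$ agree with those in $\alpha_\varepsilon$ only modulo $n$. The argument handles this by mapping the free module linearly into $M$, rather than trying to match occurrences literally.
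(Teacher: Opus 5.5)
Your proposal is correct and follows the paper's proof essentially step for step. The blocker property comes from the assignment $x_i\mapsto f(Y_i)$ together with $f(\bu(Y_1,\ldots,Y_\ell))=\bu(f(Y_1),\ldots,f(Y_\ell))$. The second assertion comes from the linear map $W_\varepsilon\to M$ that sends occurrence vectors to the chosen witnesses and auxiliary vectors to arbitrary elements of the $A_i$.

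Your coefficient comparison modulo $n$ is harmless but not needed. Linearity of $g$ together with $g(Y_i)\subseteq A_i$ already gives $a=g(\alpha_\varepsilon)\in g(\bu(Y_1,\ldots,Y_\ell))=\bu(g(Y_1),\ldots,g(Y_\ell))\subseteq\bu(A_1,\ldots,A_\ell)$.
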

\begin{proof}
Let $f:W_\varepsilon\to H$ be an $R$-homomorphism. Since the inequality holds in $\Pow(H)$ and the sets $f(Y_i)$ are nonempty,
\[
 f(\alpha_\varepsilon)
 \in\bq(f(Y_1),\ldots,f(Y_\ell))
 \subseteq\bu(f(Y_1),\ldots,f(Y_\ell))
 =f\bigl(\bu(Y_1,\ldots,Y_\ell)\bigr).
\]
Hence $f$ vanishes on a member of $B_\varepsilon$.

Suppose that $0\in B_\varepsilon$. Let $X_1,\ldots,X_\ell$ be nonempty subsets of an $R$-module $M$, and choose $p\in\bq(X_1,\ldots,X_\ell)$. Express $p$ by choosing an element of $X_i$ at each occurrence of $x_i$ in $\bq$, and send $e_{i,j}$ to the corresponding element. If $t_i=0$, send the auxiliary vector $e_{i,1}$ to any element of $X_i$. The resulting homomorphism $\psi:W_\varepsilon\to M$ satisfies
\begin{equation}\label{eq:occurrencemap}
 \psi(\alpha_\varepsilon)=p,\qquad \psi(Y_i)\subseteq X_i.
\end{equation}
Since $\alpha_\varepsilon\in\bu(Y_1,\ldots,Y_\ell)$, it follows that
\[
 p\in\psi\bigl(\bu(Y_1,\ldots,Y_\ell)\bigr)
 =\bu(\psi(Y_1),\ldots,\psi(Y_\ell))
 \subseteq\bu(X_1,\ldots,X_\ell).
\]
This proves the inequality in $\Pow(M)$.
\end{proof}

Retain the blockers $B_\varepsilon$ with $0\notin B_\varepsilon$. For a finite $R$-module $V$, a \emph{forbidden configuration} is a set of the form
\begin{equation}\label{eq:forbidden}
 a+\phi(B_\varepsilon),\qquad
 a\in V,\quad\phi\in\Hom_R(W_\varepsilon,V),\quad
 0\notin\phi(B_\varepsilon).
\end{equation}
Let $\calI_{\calE}(V)$ be the family of all nonempty subsets of $V$ that contain such a configuration. Define an equivalence relation $\theta_{\calE}$ on $\Pow(V)$ by
\[
 X\mathrel{\theta_{\calE}}Y
 \quad\Longleftrightarrow\quad
 X=Y\ \text{ or }\ X,Y\in\calI_{\calE}(V).
\]

\begin{lemma}\label{lem:congruence}
The relation $\theta_{\calE}$ is a semiring congruence. The quotient
\[
 Q_{\calE}(V)=\Pow(V)/\theta_{\calE}
\]
is a finite commutative ai-semiring.
\end{lemma}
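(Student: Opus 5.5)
The plan is to show that $\calI_{\calE}(V)$ behaves like an ideal of $\Pow(V)$ for both operations. Then $\theta_{\calE}$ is the Rees-type congruence collapsing $\calI_{\calE}(V)$ to a single class. Finiteness and commutativity come for free: $\Pow(V)$ is finite because $V$ is a finite module, and it is commutative because multiplication is Minkowski addition in the abelian group $V$. Additive idempotence and distributivity pass to any quotient by a congruence. So everything reduces to compatibility of $\theta_{\calE}$ with $\cup$ and $+_V$.

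First I treat union. Suppose $X\mathrel{\theta_{\calE}}Y$ and $Z\in\Pow(V)$. If $X=Y$ there is nothing to prove. Otherwise $X,Y\in\calI_{\calE}(V)$, and $X\cup Z\supseteq X$ and $Y\cup Z\supseteq Y$ still contain the forbidden configurations that $X$ and $Y$ contain. So both unions lie in $\calI_{\calE}(V)$, because the family is upward closed under inclusion.

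Next I treat Minkowski addition. Again take $X\in\calI_{\calE}(V)$, so that $X\supseteq a+\phi(B_\varepsilon)$ with $0\notin\phi(B_\varepsilon)$, and take a nonempty $Z$. I pick any $z\in Z$. Then
\[
 X+_V Z\supseteq z+X\supseteq (a+z)+\phi(B_\varepsilon).
\]
The right-hand side is again of the form \eqref{eq:forbidden}: the same $\phi$ and $\varepsilon$, translated by $a+z\in V$, so the condition $0\notin\phi(B_\varepsilon)$ is unchanged. Hence $X+_V Z\in\calI_{\calE}(V)$, and the same holds with $Y$ in place of $X$, so the products are related. The fact that configurations are defined up to translation is exactly what makes this work, so this is the crucial point, though it is immediate. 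By commutativity the left-multiplication case is identical. Compatibility in each argument separately, together with the fact that $\theta_{\calE}$ is reflexive, symmetric and transitive, gives a congruence.

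The only potential obstacle is confirming that $\theta_{\calE}$ is an equivalence relation. Transitivity needs care only when $X=Y$ and $Y,Z\in\calI_{\calE}(V)$, and then $X\in\calI_{\calE}(V)$, so the relation holds. It is also harmless if $\calI_{\calE}(V)$ is empty: then $\theta_{\calE}$ is the identity relation and the quotient is $\Pow(V)$ itself.
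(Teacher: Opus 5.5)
Your proof is correct and follows essentially the same route as the paper: upward closure of $\calI_{\calE}(V)$ handles union, and translation invariance (choosing $z\in Z$ so that $z+X\subseteq X+_V Z$) handles Minkowski addition, with finiteness and commutativity inherited from $V$. Your explicit check that $\theta_{\calE}$ is an equivalence relation is a small detail the paper leaves implicit.
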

\begin{proof}
The family $\calI_{\calE}(V)$ is upward closed and invariant under translations. If $X\in\calI_{\calE}(V)$ and $Z\in\Pow(V)$, then $X\cup Z$ belongs to the family. Choosing $z\in Z$, we also have $X+z\subseteq X+_V Z$, so $X+_V Z$ belongs to the family. These facts prove compatibility with addition and multiplication. The quotient is finite and commutative because $V$ is finite and abelian.
\end{proof}

\begin{lemma}\label{lem:fragmentmodel}
For every finite $R$-module $V$, the semiring $Q_{\calE}(V)$ satisfies every inequality in $\calE$.
\end{lemma}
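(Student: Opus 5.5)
Fix $\varepsilon=(\bq\preceq\bu)\in\calE$ and nonempty subsets $X_1,\ldots,X_\ell$ of $V$. Write $\overline{X}$ for the $\theta_{\calE}$-class of $X$. Since the quotient map is a homomorphism, it suffices to prove
\[
\bq(X_1,\ldots,X_\ell)\cup\bu(X_1,\ldots,X_\ell)\ \mathrel{\theta_{\calE}}\ \bu(X_1,\ldots,X_\ell),
\]
computed in $\Pow(V)$. Put $P=\bq(X_1,\ldots,X_\ell)$ and $U=\bu(X_1,\ldots,X_\ell)$. Two easy cases come first. If $U\in\calI_{\calE}(V)$, then $P\cup U\in\calI_{\calE}(V)$ as well, because the family is upward closed (Lemma~\ref{lem:congruence}), so both sides are related. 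If $P\subseteq U$, the two sides are equal. It therefore remains to show that every $p\in P$ lies in $U$, or else that $U\in\calI_{\calE}(V)$.

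Take $p\in P$ and construct $\psi:W_\varepsilon\to V$ exactly as in the proof of Lemma~\ref{lem:occurrence}. Send each $e_{i,j}$ with $j\le t_i$ to the element of $X_i$ chosen at the $j$th occurrence of $x_i$ in the expansion of $p$, and send each auxiliary vector to an arbitrary element of $X_i$. Then \eqref{eq:occurrencemap} holds, and by functoriality of term evaluation under module homomorphisms,
\[
\psi\bigl(\bu(Y_1,\ldots,Y_\ell)\bigr)=\bu(\psi(Y_1),\ldots,\psi(Y_\ell))\subseteq U .
\]
Consequently
\[
p-\psi(B_\varepsilon)=\psi\bigl(\alpha_\varepsilon-(\alpha_\varepsilon-\bu(Y_1,\ldots,Y_\ell))\bigr)
\]
is a subset of $U$. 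Note also that $\psi(\alpha_\varepsilon-b)=p-\psi(b)$ for each $b\in B_\varepsilon$.

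We now split according to whether $0\in\psi(B_\varepsilon)$.
\begin{itemize}
\item If $0\in\psi(B_\varepsilon)$, then $p\in p-\psi(B_\varepsilon)\subseteq U$.
\item If $0\notin\psi(B_\varepsilon)$, we first check that $B_\varepsilon$ is a retained blocker, i.e.\ $0\notin B_\varepsilon$. This holds because $0\in B_\varepsilon$ would give $0=\psi(0)\in\psi(B_\varepsilon)$. Then $-\psi\in\Hom_R(W_\varepsilon,V)$ and $0\notin(-\psi)(B_\varepsilon)$, so the set $p+(-\psi)(B_\varepsilon)$ is a forbidden configuration as in \eqref{eq:forbidden}. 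This set is contained in $U$, so $U\in\calI_{\calE}(V)$.
\end{itemize}

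In every case the required relation holds, so $Q_{\calE}(V)\models\bq\preceq\bu$ for every $\varepsilon\in\calE$. The main point needing care is the sign convention in the second case: the forbidden configuration must have the form $a+\phi(B_\varepsilon)$, and we obtain this by taking $\phi=-\psi$. We should also confirm that the evaluation of $\bu$ commutes with $\psi$. This works because $\psi$ is additive, and so induces a semiring homomorphism $\Pow(W_\varepsilon)\to\Pow(V)$.
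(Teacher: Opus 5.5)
Your proposal is correct and follows the paper's proof: you build the occurrence homomorphism $\psi$ with $\psi(\alpha_\varepsilon)=p$, and you recognize $p+(-\psi)(B_\varepsilon)=\psi(\bu(Y))\subseteq\bu(X)$ as a forbidden configuration, which collapses $\bu(X)$ and $\bu(X)\cup\bq(X)$ into the same class. The differences are cosmetic. You case-split on $0\in\psi(B_\varepsilon)$ for an arbitrary $p$ and deduce $0\notin B_\varepsilon$ from $\psi(0)=0$. The paper instead picks $p\in\bq(X)\setminus\bu(X)$ and gets $0\notin B_\varepsilon$ from Lemma~\ref{lem:occurrence}.
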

\begin{proof}
Fix $\varepsilon=(\bq\preceq\bu)\in\calE$ and nonempty representatives $X_1,\ldots,X_\ell$ for an assignment into the quotient. If $\bq(X)\subseteq\bu(X)$, the inequality holds under this assignment.

Otherwise choose $p\in\bq(X)\setminus\bu(X)$ and construct $\psi$ as in~\eqref{eq:occurrencemap}, including the choices for auxiliary vectors. Then
\[
 \psi(\alpha_\varepsilon)=p,\qquad
 \psi\bigl(\bu(Y)\bigr)\subseteq\bu(X).
\]
Lemma~\ref{lem:occurrence} implies $0\notin B_\varepsilon$. Moreover, $0\notin\psi(B_\varepsilon)$: an equality $\psi(\alpha_\varepsilon-b)=0$ with $b\in\bu(Y)$ would imply $p\in\bu(X)$. Thus
\begin{equation}\label{eq:forcing}
 p+(-\psi)(B_\varepsilon)
 =p-\psi(B_\varepsilon)
 =\psi\bigl(\bu(Y)\bigr)\subseteq\bu(X)
\end{equation}
is a forbidden configuration. Both $\bu(X)$ and $\bu(X)\cup\bq(X)$ belong to $\calI_{\calE}(V)$ and therefore have the same quotient class. This proves the inequality.
\end{proof}

Two nonempty finite sets have the same \emph{affine type} if an isomorphism between their difference submodules, followed by a translation, maps one set onto the other. More explicitly, a copy of $C$ in a module $V$ is a set $a+\iota(C-c_0)$, where $c_0\in C$, $a\in V$, and $\iota:L(C)\to V$ is an injective homomorphism.

\begin{lemma}\label{lem:types}
For the fixed family $\calE$, only finitely many affine types of sets $\phi(B_\varepsilon)$ with $0\notin\phi(B_\varepsilon)$ occur as the finite target module varies. Every representative $C$ satisfies
\[
 |L(C)|<n^{|C|-1}.
\]
\end{lemma}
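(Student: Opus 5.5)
Two things must be shown: the affine types are finite in number, and each representative $C=\phi(B_\varepsilon)$ with $0\notin C$ satisfies the strict bound. I will obtain the bound by transporting $C$ back to a blocker in the free module $W_\varepsilon$ and then invoking Proposition~\ref{prop:defect}.

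\textbf{Finiteness.} Fix $\varepsilon\in\calE$ and a homomorphism $\phi\colon W_\varepsilon\to V$. The set $C=\phi(B_\varepsilon)$ has at most $|B_\varepsilon|$ elements. Pick $b_0\in B_\varepsilon$ and put $c_0=\phi(b_0)$. Then $L(C)=\phi(L(B_\varepsilon))$, so $L(C)$ is a quotient of the fixed finite module $L(B_\varepsilon)$. Moreover $C-c_0$ is the image of the fixed set $B_\varepsilon-b_0$ under the surjection $L(B_\varepsilon)\to L(C)$. Such a surjection is determined by its kernel, which is a submodule of $L(B_\varepsilon)$. Up to isomorphism of the difference submodule, the pair $(L(C),C-c_0)$ is therefore determined by this kernel, and there are finitely many choices. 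Since $\calE$ is finite, only finitely many affine types occur.

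\textbf{Strict bound.} Put $L=L(C)$ and let $N=\ker\bigl(\phi|_{L(B_\varepsilon)}\bigr)$. The natural plan is to factor $\phi$ through a quotient of $W_\varepsilon$ in which $C$ appears as a blocker. To keep the ambient module nice, replace $V$ by the submodule $V'=\phi(W_\varepsilon)$, which contains $C$ and $L$, and regard $\phi\colon W_\varepsilon\to V'$ as a surjection. The key claim is that $C$ is an $H$-kernel blocker in $V'$. Given $g\in\Hom_R(V',H)$, the composite $g\circ\phi$ lies in $\Hom_R(W_\varepsilon,H)$. By Lemma~\ref{lem:occurrence} there is $b\in B_\varepsilon$ with $g\phi(b)=0$, and so $g$ vanishes at $\phi(b)\in C$. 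Since $C\subseteq V'\setminus\{0\}$ by hypothesis, Proposition~\ref{prop:defect} applied in the finite module $V'$ gives $|L(C)|<n^{|C|-1}$. This holds for any representative of the type, because the cardinalities $|L(C)|$ and $|C|$ are invariants of the affine type.

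\textbf{Main obstacle.} The delicate point is the argument that $C$ remains a blocker after the map. This needs only that homomorphisms out of the target pull back to $W_\varepsilon$, which holds trivially; no extension property is required. The appeal to Proposition~\ref{prop:defect} is therefore legitimate with $M=V'$, and $V'$ is a finite $R$-module. The finiteness argument is routine once one notes that types are indexed by submodules of $L(B_\varepsilon)$.
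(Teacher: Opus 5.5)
Your proposal is correct and follows essentially the same route as the paper. Finiteness comes from the finitely many possible kernels; you index by submodules of $L(B_\varepsilon)$ rather than of $W_\varepsilon$, which is a harmless refinement. The strict bound comes from pulling homomorphisms $\phi(W_\varepsilon)\to H$ back along $\phi$, applying Lemma~\ref{lem:occurrence}, and then invoking Proposition~\ref{prop:defect} in the image module; your unused definition of $N$ in the second part can simply be dropped.
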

\begin{proof}
For a fixed $W_\varepsilon$, two maps with the same kernel induce an isomorphism between their images carrying one image of $B_\varepsilon$ onto the other. There are finitely many submodules of $W_\varepsilon$ and finitely many retained blockers, which proves finiteness of the types.

The set $\phi(B_\varepsilon)$ is an $H$-kernel blocker in $\phi(W_\varepsilon)$: compose any homomorphism from that image module to $H$ with $\phi$ and apply Lemma~\ref{lem:occurrence}. Proposition~\ref{prop:defect} gives the inequality. Both cardinalities are invariant under affine isomorphism.
\end{proof}

\section{Sparse avoidance and ordered product fibres}\label{sec:sparse}
We now choose sparse subsets that avoid the configurations of Section~\ref{sec:quotients} and meet a prescribed family of dense sets.

\begin{lemma}[Sparse hitting and avoidance]\label{lem:sparse}
Let $n\geq 2$, let $R=\Z/n\Z$, and let $\calC$ be a finite family of nonempty configurations in finite $R$-modules, each satisfying
\[
 |L(C)|<n^{|C|-1}.
\]
Fix constants $\eta>0$, $b\geq 1$, and $a\geq 1$. For each positive integer $r$, let $\mathcal{T}_r$ be a nonempty family of at most $ab^r$ subsets of $R^r$, each of cardinality at least $\eta n^r$. Then, for all sufficiently large $r$, there exists
\[
 D\subseteq R^r\setminus\{0\},\qquad |D|\leq 2r^2,
\]
which meets every member of $\mathcal{T}_r$ and contains no affine copy of any member of $\calC$.
\end{lemma}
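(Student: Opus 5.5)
Random construction with alteration: pick a random sparse set, count the affine copies of members of $\calC$ it contains, delete one point from each, and check that it still meets every member of $\mathcal{T}_r$.

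\textbf{Sampling.} Put $N=n^r$ and $p=r^2/N$. Let $D_0\subseteq R^r\setminus\{0\}$ contain each nonzero point independently with probability $p$. Then $\E|D_0|\leq r^2$. By a Chernoff bound,
\[
 \Prb\bigl(|D_0|>\tfrac32 r^2\bigr)\leq e^{-cr^2}.
\]

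\textbf{Hitting.} Fix $T\in\mathcal{T}_r$. At most one point of $T$ is zero, so $T$ has at least $\eta N-1$ candidate points. Hence
\[
 \Prb(D_0\cap T=\varnothing)\leq(1-p)^{\eta N-1}\leq e^{-\eta r^2/2}
\]
for large $r$. A union bound over the at most $ab^r$ members of $\mathcal{T}_r$ gives failure probability at most $ab^r e^{-\eta r^2/2}\to 0$.

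This is not quite enough, because deleting points may destroy hits. So I will require more: with high probability, $|D_0\cap T|\geq \eta r^2/2$ for every $T\in\mathcal{T}_r$. This follows from a lower-tail Chernoff bound, $\Prb(\mathrm{Bin}<\mu/2)\leq e^{-\mu/8}$ with $\mu\geq\eta r^2-p$, again union-bounded over $ab^r$ sets.

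\textbf{Counting copies.} Fix $C\in\calC$ with $|C|=m$ and $|L(C)|=\ell$. By hypothesis $\ell\leq n^{m-1}/2$; since $\ell$ is an integer dividing a power of $n$, we can say more precisely $\ell\leq n^{m-2}$ or $\ell$ is a proper divisor of $n^{m-1}$. The number of copies $a+\iota(C-c_0)$ in $R^r$ is at most
\[
 N\cdot|\Hom(L(C),R^r)|=N\cdot\ell^{\,r}.
\]
Here I use that homomorphisms from $L(C)$ to $R^r$ are $r$-tuples of characters, so $|\Hom(L(C),R^r)|=|\Hom(L(C),R)|^r=\ell^r$ by Lemma~\ref{lem:characters}(ii). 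Each copy has exactly $m$ distinct points since $\iota$ is injective, so it lies in $D_0$ with probability $p^m$. The expected number of copies is therefore at most
\[
 N\ell^r\,r^{2m}N^{-m}=r^{2m}\bigl(\ell/n^{m-1}\bigr)^r,
\]
which tends to $0$ exponentially because $\ell/n^{m-1}<1$.

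Singletons need a separate word. If $m=1$, then $L=0$ and $\ell=1=n^0$, so the strict inequality excludes them. Every $C\in\calC$ therefore has $m\geq 2$, and the bound above applies.

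\textbf{Alteration.} Summing over the finite family $\calC$, the expected total number of copies is $o(1)$. By Markov's inequality, with probability $1-o(1)$ there are no copies at all. With positive probability the three good events hold simultaneously: $|D_0|\leq\tfrac32 r^2$, every $T$ is hit at least $\eta r^2/2$ times, and no copy of any member of $\calC$ occurs. Take $D=D_0$ from such an outcome. Since the copy count can be made zero outright, the deletion step is not even needed, and $|D|\leq 2r^2$.

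The main obstacle is the counting bound $|\Hom(L(C),R^r)|=|L(C)|^r$ combined with strictness of the exponent gap. The counting identity is Lemma~\ref{lem:characters}(ii) applied coordinatewise. The gap comes from Proposition~\ref{prop:defect} through Lemma~\ref{lem:types}.

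Two further points need care. First, copies in $R^r$ are counted correctly: $L(C)$ has exponent dividing $n$, so it is an $R$-module. Second, the choice $p=r^2/N$ has to balance the hitting requirement, which needs $pN\gg r$ so that $e^{-\eta pN}$ beats $b^r$, against the copy count, where $r^{2m}$ is only polynomial and so is harmless. The choice $pN=r^2$ achieves both.
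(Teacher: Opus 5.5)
Your proposal is correct and is essentially the paper's proof: you sample nonzero points independently with $p=r^2/n^r$, take a union bound over the at most $ab^r$ dense targets, and count at most $n^r|L(C)|^r p^{|C|}=r^{2|C|}(|L(C)|/n^{|C|-1})^r$ expected affine copies before applying Markov's inequality and a concentration bound on $|D|$. The only differences are cosmetic: you use a Chernoff bound where the paper uses Chebyshev, and your multiplicity-hitting and alteration step is unnecessary, as you note yourself.
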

\begin{proof}
Write $N=n^r$ and select each nonzero point of $R^r$ independently with probability $p=r^2/N$, obtaining a random set $D_r$. For sufficiently large $r$, $p<1$. The probability of missing at least one member of $\mathcal{T}_r$ is at most
\begin{equation}\label{eq:hittingprob}
 ab^r(1-p)^{\eta N-1}
 \leq ab^r\exp\bigl(-p(\eta N-1)\bigr)=o(1).
\end{equation}

Fix $C\in\calC$, choose $c_0\in C$, and put $h=|C|$ and $L=L(C)$. An affine copy of $C$ in $R^r$ has the form
\[
 z+\iota(C-c_0),
\]
where $z\in R^r$ and $\iota:L\to R^r$ is an injective homomorphism. By Lemma~\ref{lem:characters}(ii), the number of such copies is at most
\[
 N|\Hom_R(L,R^r)|=N|L|^r.
\]
A copy containing zero is never contained in $D_r$; every other copy is contained in $D_r$ with probability $p^h$. Hence the expected number of copies is at most
\begin{equation}\label{eq:copyprob}
 N|L|^r p^h
 =r^{2h}\left(\frac{|L|}{n^{h-1}}\right)^r=o(1).
\end{equation}
Summing over the fixed finite family and applying Markov's inequality shows that the probability of containing a forbidden copy is $o(1)$.

Finally, $\E|D_r|<r^2$ and $\operatorname{Var}(|D_r|)\leq r^2$. Chebyshev's inequality therefore gives
\[
 \Prb(|D_r|>2r^2)\leq r^{-2}.
\]
Together with \eqref{eq:hittingprob} and \eqref{eq:copyprob}, this shows that the probability of any failure tends to zero. The required set exists for every sufficiently large $r$.
\end{proof}

We next produce the dense sets to which the hitting condition will be applied. Products are taken in the displayed order.

\begin{lemma}[Uniform product-fibre bound]\label{lem:fibres}
Let $G$ be a finite group and let $m\geq 2$ be divisible by $\expon(G)$. For $g_1,\ldots,g_r\in G$ and $g\in\langle g_1\rangle\cdots\langle g_r\rangle$, the number of tuples $(k_1,\ldots,k_r)\in\{1,\ldots,m\}^r$ satisfying
\[
 g_1^{k_1}\cdots g_r^{k_r}=g
\]
is at least
\begin{equation}\label{eq:fibredensity}
 m^{-(|G|-1)}m^r.
\end{equation}
\end{lemma}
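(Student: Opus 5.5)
The plan is to argue by backward induction on $r$, peeling off the last factor, and to charge a loss of a factor $m$ only at the positions where the partial product sets actually grow. For $0\leq j\leq r$ put
\[
 P_j=\langle g_1\rangle\cdots\langle g_j\rangle\subseteq G,\qquad P_0=\{1\}.
\]

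Two facts about these sets will be used throughout.
\begin{enumerate}[label=\textup{(\alph*)},leftmargin=2.4em]
\item Since $1\in\langle g_j\rangle$, we have $P_{j-1}\subseteq P_j$. Hence $|P_0|\leq|P_1|\leq\cdots\leq|P_r|\leq|G|$.
\item Since $\expon(G)\mid m$, the map $k\mapsto g_j^{k}$ sends $\{1,\ldots,m\}$ onto $\langle g_j\rangle$.
\end{enumerate}
Let $J$ be the set of indices $j$ with $|P_j|>|P_{j-1}|$. The sizes start at $1$ and never exceed $|G|$, so $|J|\leq|G|-1$. The target bound therefore reduces to showing that the number of solutions is at least $m^{r-|J|}$.

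For $j\notin J$ we have $P_{j-1}\langle g_j\rangle=P_{j-1}$. In particular $P_{j-1}g_j\subseteq P_{j-1}$, and since $G$ is finite this forces $P_{j-1}g_j^{k}=P_{j-1}$ for every integer $k$. I will prove the following claim by induction on $j$: for every $h\in P_j$, the number of tuples $(k_1,\ldots,k_j)\in\{1,\ldots,m\}^j$ with $g_1^{k_1}\cdots g_j^{k_j}=h$ is at least $m^{j-|J\cap\{1,\ldots,j\}|}$. The base case $j=0$ is trivial.

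For the inductive step, take $h\in P_j$ and split on whether $j\in J$.
\begin{itemize}[leftmargin=2.4em]
\item If $j\notin J$, then for every one of the $m$ values of $k_j$, the element $hg_j^{-k_j}$ lies in $P_j g_j^{-k_j}=P_{j-1}g_j^{-k_j}=P_{j-1}$. The induction hypothesis then applies to each of these $m$ choices.
\item If $j\in J$, write $h=p\,g_j^{e}$ with $p\in P_{j-1}$ and $g_j^{e}\in\langle g_j\rangle$. By (b), some $k_j\in\{1,\ldots,m\}$ satisfies $g_j^{k_j}=g_j^{e}$, so at least one value of $k_j$ leads back into $P_{j-1}$, and the induction hypothesis applies to it.
\end{itemize}
The solution sets for distinct values of $k_j$ are disjoint, so the counts add. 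Taking $j=r$ and $h=g$ gives at least $m^{r-|J|}\geq m^{r-(|G|-1)}$ solutions, because $m\geq 2$ and $|J|\leq|G|-1$.

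The only delicate point is the step $j\notin J$. There one must use the finiteness of $G$ to upgrade $P_{j-1}g_j\subseteq P_{j-1}$ to invariance under all powers of $g_j$, including negative ones. This is what makes every value of $k_j$ admissible at these positions, rather than only some of them.
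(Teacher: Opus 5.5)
Your proposal is correct and is essentially the paper's argument. The paper phrases it probabilistically: it tracks the smallest positive value $\delta_i$ of the distribution of $g_1^{k_1}\cdots g_i^{k_i}$ and loses a factor $m$ only when the support $K_1\cdots K_i$ strictly grows. That is exactly your induction over the sets $P_j$, multiplied through by $m^j$, with the loss charged to the index set $J$, where $|J|\leq|G|-1$.
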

\begin{proof}
Choose each $k_i$ independently and uniformly from $\{1,\ldots,m\}$. Then $g_i^{k_i}$ is uniform on $K_i=\langle g_i\rangle$. Let $\mu_i$ be the distribution of the product of the first $i$ factors, and put
\[
 S_0=\{1_G\},\qquad S_i=K_1\cdots K_i.
\]
Each $S_i$ is the support of $\mu_i$, and $S_{i-1}\subseteq S_i$ because $1_G\in K_i$. Let $\delta_i$ be the smallest positive value of $\mu_i$, with $\delta_0=1$. Convolution gives
\begin{equation}\label{eq:convolution}
 \mu_i(x)=\frac{1}{|K_i|}\sum_{h\in K_i}\mu_{i-1}(xh^{-1}).
\end{equation}
If $S_i=S_{i-1}$, then $S_{i-1}K_i=S_{i-1}$; for each $x\in S_i$, every summand in \eqref{eq:convolution} is at least $\delta_{i-1}$. Thus $\delta_i\geq\delta_{i-1}$. If the support grows strictly, at least one summand is positive, so
\[
 \delta_i\geq\delta_{i-1}/|K_i|\geq\delta_{i-1}/m.
\]
There are at most $|G|-1$ strict support increases. Consequently $\delta_r\geq m^{-(|G|-1)}$. Multiplying this probability bound by $m^r$ proves \eqref{eq:fibredensity}.
\end{proof}

Fix a finite family $\calG=\{G_1,\ldots,G_t\}$ and an integer $n\geq 2$, and put $R=\Z/n\Z$. Choose $m$ divisible by $n$ and by every $\expon(G_j)$; for example,
\begin{equation}\label{eq:commonexponent}
 m=\operatorname{lcm}\bigl(n,\expon(G_1),\ldots,\expon(G_t)\bigr).
\end{equation}
For $G\in\calG$ and $\boldsymbol{g}=(g_1,\ldots,g_r)\in G^r$, define $T_{G,\boldsymbol{g}}\subseteq R^r$ by
\begin{equation}\label{eq:targetset}
 \begin{split}
 T_{G,\boldsymbol{g}}=\bigl\{d\in R^r:\;&\text{there are }1\leq k_i\leq m\text{ with }
 k_i\equiv 1-d_i\pmod n,\\[-1mm]
 &g_1^{k_1}\cdots g_r^{k_r}=g_1\cdots g_r\bigr\}.
 \end{split}
\end{equation}

\begin{corollary}\label{cor:dense}
The sets $T_{G,\boldsymbol{g}}$ satisfy the hypotheses on $\mathcal{T}_r$ in Lemma~\ref{lem:sparse}, with constants independent of $r$.
\end{corollary}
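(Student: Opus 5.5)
We must check two things for the family $\mathcal{T}_r=\{T_{G,\boldsymbol{g}}:G\in\calG,\ \boldsymbol{g}\in G^r\}$: it has at most $ab^r$ members, and each member has at least $\eta n^r$ elements, with $a$, $b$, $\eta$ independent of $r$.

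The count is immediate. The number of pairs $(G,\boldsymbol{g})$ is $\sum_{j}|G_j|^r\leq t\,b^r$, where $b=\max_j|G_j|$. So we may take $a=t$.

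For density, fix $G\in\calG$ and $\boldsymbol{g}\in G^r$. Put $g=g_1\cdots g_r$, which lies in $\langle g_1\rangle\cdots\langle g_r\rangle$. The number $m$ from \eqref{eq:commonexponent} is divisible by $\expon(G)$ and satisfies $m\geq n\geq 2$. Hence Lemma~\ref{lem:fibres} applies and gives a set $K\subseteq\{1,\ldots,m\}^r$ of tuples with $g_1^{k_1}\cdots g_r^{k_r}=g$, where $|K|\geq m^{-(|G|-1)}m^r$.

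Now consider the map $\pi:\{1,\ldots,m\}^r\to R^r$ given by $d_i=1-k_i \bmod n$. By definition \eqref{eq:targetset}, $\pi(K)\subseteq T_{G,\boldsymbol{g}}$. Since $n\mid m$, each residue class mod $n$ contains exactly $m/n$ elements of $\{1,\ldots,m\}$. So every fibre of $\pi$ has exactly $(m/n)^r$ elements. Therefore
\[
 |T_{G,\boldsymbol{g}}|\geq|\pi(K)|\geq\frac{|K|}{(m/n)^r}\geq m^{-(|G|-1)}n^r .
\]
Setting $\eta=m^{-(b-1)}$ gives a bound that is uniform over $G\in\calG$ and independent of $r$. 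The sets are nonempty as well, because $(1,\ldots,1)\in K$ maps to $0\in T_{G,\boldsymbol{g}}$.

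There is no real obstacle here. The only care needed is that the divisibility $n\mid m$ makes the fibres of $\pi$ uniform, and that $\expon(G)\mid m$ is exactly what Lemma~\ref{lem:fibres} requires. Both are guaranteed by the choice \eqref{eq:commonexponent}.
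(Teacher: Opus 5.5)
Your proof is correct and matches the paper's argument step for step. Like the paper, you count at most $t\bigl(\max_j|G_j|\bigr)^r$ sets, get density $m^{-(\max_j|G_j|-1)}n^r$ from Lemma~\ref{lem:fibres} together with the uniform fibre size $(m/n)^r$ of the reduction map, and use the all-ones exponent tuple to see that each set contains $0$.
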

\begin{proof}
Let $M=\max_{G\in\calG}|G|$ and $\eta=m^{-(M-1)}$. By Lemma~\ref{lem:fibres}, at least $\eta m^r$ exponent tuples give the target product $g_1\cdots g_r$. The reduction map
\[
 (k_1,\ldots,k_r)\longmapsto(1-k_1,\ldots,1-k_r)\pmod n
\]
has fibres of size exactly $(m/n)^r$. Therefore
\[
 |T_{G,\boldsymbol{g}}|\geq\eta n^r.
\]
There are at most $tM^r$ sets in the family. Taking $k_i=1$ for every $i$ shows that each set contains zero.
\end{proof}

\section{Separating identities and the main theorems}\label{sec:separation}
The quotient construction and the product-fibre estimate give the following separation theorem.

\begin{theorem}[Relative finite-fragment separation]\label{thm:relative}
Let $H$ be a finite abelian group with $|H|\geq 3$, and let $\calG=\{G_1,\ldots,G_t\}$ be a nonempty finite family of finite groups. For every finite set $\Sigma\subseteq\Id(\Pow(H))$, there exist a finite commutative ai-semiring $Q$ and an inequality $\varepsilon$ such that
\[
 Q\models\Sigma,\qquad Q\not\models\varepsilon,
 \qquad \Full{G_j}\models\varepsilon\quad(1\leq j\leq t).
\]
The separating inequality can be chosen so that every word on both sides has the same content. Moreover, with $n=\expon(H)$, the semiring $Q$ may be taken to be a quotient of $\Pow((\Z/n\Z)^r)$ for some positive integer $r$.
\end{theorem}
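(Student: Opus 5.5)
The plan is to reduce $\Sigma$ to finitely many word inequalities and build the quotient $Q=Q_{\calE}(V)$ with $V=R^r$, where $R=\Z/n\Z$ and $r$ is large. The set $D$ of Lemma~\ref{lem:sparse} will serve as the list of ``exponent defects'' of the separating inequality. First, apply Lemma~\ref{lem:reduction} to replace $\Sigma$ by an equivalent finite family $\calE$ of inequalities $\bq\preceq\bu$, each valid in $\Pow(H)$. For every $R$-module $V$, Lemma~\ref{lem:fragmentmodel} gives $Q_{\calE}(V)\models\calE$, hence $Q_{\calE}(V)\models\Sigma$. By Lemma~\ref{lem:congruence}, this quotient is finite and commutative. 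Let $\calC$ be the finite set of affine types supplied by Lemma~\ref{lem:types}. Each $C\in\calC$ satisfies $|L(C)|<n^{|C|-1}$. Choose $m$ as in~\eqref{eq:commonexponent}. By Corollary~\ref{cor:dense} and Lemma~\ref{lem:sparse}, for large $r$ there is a set $D\subseteq R^r\setminus\{0\}$ with two properties: $D$ meets every $T_{G,\boldsymbol g}$ (for $G\in\calG$ and $\boldsymbol g\in G^r$), and $D$ contains no affine copy of any member of $\calC$.

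Next, define the inequality $\varepsilon$ on the variables $x_1,\ldots,x_r$ by
\[
 x_1x_2\cdots x_r\ \preceq\ \sum_{d\in D}\ \sum_{\substack{1\leq k_i\leq m\\ k_i\equiv 1-d_i \ (\mathrm{mod}\ n)}} x_1^{k_1}x_2^{k_2}\cdots x_r^{k_r}.
\]
All $k_i\geq1$, so every word has content $\{x_1,\ldots,x_r\}$. To see that $\Full{G_j}\models\varepsilon$, note that for a word $\bq$ with $c(\bq)=c(\bu)$ we have $D_{\bq}(\bu)=\bu$. By Lemma~\ref{lem:zeroidentity}, it therefore suffices to check $\Pow(G_j)\models\varepsilon$. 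Take nonempty sets $A_i\subseteq G_j$ and an element $g_1\cdots g_r$ of the left side, with $g_i\in A_i$. Since $D$ meets $T_{G_j,\boldsymbol g}$, there exist $d\in D$ and exponents $k_i$ with $k_i\equiv1-d_i$ such that $g_1^{k_1}\cdots g_r^{k_r}=g_1\cdots g_r$. This element lies in $A_1^{k_1}\cdots A_r^{k_r}$, which is one of the upper summands.

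To see that $Q\not\models\varepsilon$, assign $x_i\mapsto\{e_i\}$, the standard basis vectors of $R^r$. Put $\mathbf 1=\sum_i e_i$. The left side evaluates to $\{\mathbf 1\}$. Because $n\mid m$ and $k_i\equiv1-d_i$, each upper word evaluates to $\{\mathbf 1-d\}$, so the right side is $\mathbf 1-D$. Since $0\notin D$, we have $\mathbf 1\notin\mathbf 1-D$, so $\{\mathbf 1\}\cup(\mathbf 1-D)\neq\mathbf 1-D$ in $\Pow(V)$. These two sets are identified in $Q$ only if both lie in $\calI_{\calE}(V)$. It is therefore enough to show $\mathbf 1-D\notin\calI_{\calE}(V)$.

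That last step is the one needing care. A forbidden configuration $a+\phi(B_\varepsilon)$ is an affine copy (with an injective map on the difference module) of the type of $\phi(B_\varepsilon)$, and this type belongs to $\calC$. Affine copies are closed under the map $z\mapsto\mathbf 1-z$: replace $a$ by $\mathbf 1-a$ and $\iota$ by $-\iota$. So if $\mathbf 1-D$ contained a forbidden configuration, $D$ would contain an affine copy of some member of $\calC$, contradicting the choice of $D$. Hence $Q\not\models\varepsilon$, and $Q$ is a quotient of $\Pow(R^r)$, as claimed.

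The main obstacle I expect is the compatibility between the two sides. The affine types must be counted exactly as in Lemma~\ref{lem:sparse}, with injective $\iota$ on $L(C)$, and this must match the notion of forbidden configuration used in $\calI_{\calE}$. The hitting family must also have the required size $\leq ab^r$ and density $\geq\eta n^r$; both are exactly what Corollary~\ref{cor:dense} provides.
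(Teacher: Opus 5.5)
Your proposal is correct and follows the paper's proof essentially step for step. It uses the same reduction to $\calE$, the same sparse set $D$ from Lemma~\ref{lem:sparse} and Corollary~\ref{cor:dense}, the same lifted separating inequality, the same singleton evaluation giving $\mathbf 1-D$, and the same reflection $z\mapsto\mathbf 1-z$ to rule out forbidden configurations. The only differences are cosmetic: you handle empty assignments in $\Full{G_j}$ through Lemma~\ref{lem:zeroidentity} (using $D_{\bq}(\bu)=\bu$) instead of checking them directly, and you leave implicit that $D\neq\varnothing$ (it meets the nonempty family of sets $T_{G,\boldsymbol{g}}$), which is needed for the upper side to be a genuine term.
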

\begin{proof}
Put $n=\expon(H)$ and $R=\Z/n\Z$. Apply Lemma~\ref{lem:reduction} to $\Sigma$, obtaining a finite family $\calE$ of word inequalities valid in $\Pow(H)$. Form its occurrence blockers and let $\calC$ contain one representative of each affine type from Lemma~\ref{lem:types}. If no blocker is retained, take $\calC=\varnothing$.

Choose $m$ as in \eqref{eq:commonexponent}. By Lemma~\ref{lem:sparse} and Corollary~\ref{cor:dense}, for some sufficiently large $r$ there is a set
\[
 D\subseteq R^r\setminus\{0\},\qquad |D|\leq 2r^2,
\]
which contains no affine copy of a member of $\calC$ and meets every set $T_{G,\boldsymbol{g}}$. In particular $D$ is nonempty.

For $a\in R$, define the one-variable polynomial
\begin{equation}\label{eq:liftpoly}
 \bp_a(x)=\sum_{\substack{1\leq k\leq m\\ k\equiv 1-a\;(\mathrm{mod}\ n)}}x^k.
\end{equation}
There are exactly $m/n$ summands, each with a positive exponent. For $d=(d_1,\ldots,d_r)\in D$, put
\begin{equation}\label{eq:separatorpoly}
 \bq_r=x_1x_2\cdots x_r,\qquad
 \bu_D=\sum_{d\in D}\bp_{d_1}(x_1)\bp_{d_2}(x_2)\cdots \bp_{d_r}(x_r).
\end{equation}
The order of the factors is $1,2,\ldots,r$ in every summand. We claim that
\begin{equation}\label{eq:epsilon}
 \varepsilon:\qquad\bq_r\preceq\bu_D
\end{equation}
holds in $\Full{G}$ for every $G\in\calG$.

Indeed, let $X_1,\ldots,X_r\subseteq G$. If some $X_i$ is empty, then both terms in the inequality have empty value, since every expanded monomial contains every variable with positive exponent. Otherwise take an arbitrary
\[
 g=g_1\cdots g_r\in X_1X_2\cdots X_r,
 \qquad g_i\in X_i.
\]
Choose $d\in D\cap T_{G,\boldsymbol{g}}$ and exponent witnesses $k_i$ from \eqref{eq:targetset}. The word $x_1^{k_1}\cdots x_r^{k_r}$ occurs in the expansion of the $d$-summand of \eqref{eq:separatorpoly}, and
\[
 g=g_1^{k_1}\cdots g_r^{k_r}
   \in X_1^{k_1}\cdots X_r^{k_r}
   \subseteq\bu_D(X_1,\ldots,X_r).
\]
This proves the claim.

Now set $V=R^r$ and $Q=Q_{\calE}(V)$. By Lemmas~\ref{lem:fragmentmodel} and \ref{lem:reduction}, $Q\models\Sigma$. Let $e_1,\ldots,e_r$ be the standard basis of $V$, and put $\beta=e_1+\cdots+e_r$. Evaluate the variables at the quotient classes of the singletons $\{e_i\}$. Before taking the quotient, we have
\begin{equation}\label{eq:evaluation}
 \bq_r(\{e_1\},\ldots,\{e_r\})=\{\beta\},\qquad
 \bu_D(\{e_1\},\ldots,\{e_r\})=K:=\beta-D.
\end{equation}
The second equality holds because all exponents in $\bp_{d_i}$ have the same residue $1-d_i$ modulo $n$.

The set $K$ does not belong to $\calI_{\calE}(V)$. Otherwise it would contain $a+\phi(B)$ for a retained blocker with $0\notin\phi(B)$. The affine bijection $v\mapsto\beta-v$ would then place
\[
 (\beta-a)+(-\phi)(B)
\]
inside $D$, contrary to the avoidance property and the definition of $\calC$.

Also $\beta\notin K$ because $0\notin D$. The class of $K$ is a singleton congruence class, so $K$ and $K\cup\{\beta\}$ have distinct classes, whether or not the latter belongs to the collapsed family. Consequently,
\[
 [K]+[\{\beta\}]=[K\cup\{\beta\}]\ne[K].
\]
Thus $Q\not\models\varepsilon$, while \eqref{eq:epsilon} holds in every $\Full{G_j}$. This proves the theorem.
\end{proof}

\begin{lemma}\label{lem:abeliansubgroup}
Every finite group of order at least three contains an abelian subgroup of order at least three.
\end{lemma}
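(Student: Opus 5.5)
The plan is to split into two cases according to whether $G$ contains an element of order at least three. Let $G$ be a finite group with $|G|\geq 3$.

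\emph{Case 1: some $g\in G$ has order at least three.} Then the cyclic subgroup $\langle g\rangle$ is abelian and has order at least three, and we are done.

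\emph{Case 2: every nonidentity element has order two.} Then $G$ has exponent dividing $2$. For any $a,b\in G$ we have $(ab)^2=1$, so
\[
 ab=(ab)^{-1}=b^{-1}a^{-1}=ba .
\]
Hence $G$ itself is abelian, and since $|G|\geq 3$, the group $G$ is the required subgroup.

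There is no real obstacle. The only point to check is that Case 2 uses the standard fact that groups of exponent two are abelian, which the displayed computation verifies directly.

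For the proof of Theorem~\ref{thm:interval}, this lemma will be applied to $H$, giving an abelian subgroup $A$ with $|A|\geq 3$. Then $\Pow(A)$ is a subsemiring of $\Pow(H)$, so $\Var(\Pow(A))\subseteq\Var(\Pow(H))$. This allows Theorem~\ref{thm:relative} to be used with the abelian group $A$.
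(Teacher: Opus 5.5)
Your proof is correct and matches the paper's argument essentially verbatim: a cyclic subgroup $\langle g\rangle$ when some element has order greater than two, and otherwise the computation $ab=(ab)^{-1}=b^{-1}a^{-1}=ba$ showing that $G$ itself is abelian. Your closing remark, that passing to $A$ lets Theorem~\ref{thm:relative} be applied with an abelian lower group, is also exactly how the paper uses the lemma in Proposition~\ref{prop:localwitness}.
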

\begin{proof}
If an element has order greater than two, its cyclic subgroup suffices. Otherwise every element is its own inverse, and $xy=(xy)^{-1}=yx$ for all $x,y$. The group itself is then abelian.
\end{proof}

\begin{proposition}\label{prop:localwitness}
Let $H$ be a finite group with $|H|\geq 3$, and let $G_1,\ldots,G_t$ be finite groups. For every $k\geq 1$, there are a finite commutative ai-semiring $Q_k$ and an inequality $\sigma_k$ such that
\begin{enumerate}[label=\textup{(\roman*)},leftmargin=2.4em]
\item every subsemiring of $Q_k$ generated by at most $k$ elements belongs to $\Var(\Pow(H))$;
\item $Q_k\not\models\sigma_k$;
\item $\Full{H},\Full{G_1},\ldots,\Full{G_t}$ all satisfy $\sigma_k$;
\item every word on either side of $\sigma_k$ has the same content.
\end{enumerate}
\end{proposition}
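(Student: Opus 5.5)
The plan is to reduce to the abelian case and then apply Theorem~\ref{thm:relative} to a finite fragment of identities. By Lemma~\ref{lem:abeliansubgroup}, $H$ contains an abelian subgroup $A$ with $|A|\geq 3$. Since $\Pow(A)$ is a subsemiring of $\Pow(H)$, we have $\Var(\Pow(A))\subseteq\Var(\Pow(H))$. So it is enough to produce $Q_k$ whose $k$-generated subsemirings lie in $\Var(\Pow(A))$.

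\emph{Choice of the fragment.} Fix $k$ and take $\Sigma=\Delta_k\subseteq\Id_k(\Pow(A))$, the finite set given by Lemma~\ref{lem:finitevariable} applied to the finite ai-semiring $\Pow(A)$. Every identity of $\Pow(A)$ in the variables $x_1,\ldots,x_k$ follows from $\Delta_k$ modulo the ai-semiring axioms.

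\emph{Application of Theorem~\ref{thm:relative}.} Apply the theorem with abelian group $A$ and the enlarged family $\{H,G_1,\ldots,G_t\}$, which is a nonempty finite family of finite groups. This yields a finite commutative ai-semiring $Q_k$ and an inequality $\sigma_k=\varepsilon$ such that:
\begin{itemize}[leftmargin=2.4em]
\item $Q_k\models\Delta_k$;
\item $Q_k\not\models\sigma_k$;
\item $\Full{H}$ and every $\Full{G_j}$ satisfy $\sigma_k$;
\item every word on either side of $\sigma_k$ has the same content.
\end{itemize}
These give items (ii), (iii) and (iv) directly.

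\emph{Item (i).} Let $S$ be a subsemiring of $Q_k$ generated by $a_1,\ldots,a_j$ with $j\leq k$. Any identity of $\Pow(A)$ in finitely many variables must hold in $S$ under every assignment. Each element of $S$ is the value of some term in $a_1,\ldots,a_j$. So an assignment of arbitrary variables into $S$ can be rewritten by substituting for each variable a term in $x_1,\ldots,x_j$.

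Given an identity $\bu\approx\bv$ of $\Pow(A)$ and such terms $\bt_y$, the substituted identity $\bu(\bt_y)\approx\bv(\bt_y)$ is again an identity of $\Pow(A)$. It involves only $x_1,\ldots,x_k$, so it lies in $\Id_k(\Pow(A))$ and follows from $\Delta_k$. Hence it holds in $Q_k$, and in particular at the assignment $x_i\mapsto a_i$. Therefore $S$ satisfies every identity of $\Pow(A)$, and so $S\in\Var(\Pow(A))\subseteq\Var(\Pow(H))$.

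The main subtlety is this substitution argument. It passes from ``satisfies the $k$-variable identities'' to ``every $k$-generated subsemiring lies in the variety'', and it needs $\Delta_k$ to axiomatize all of $\Id_k$ rather than merely a fragment. Everything else is supplied by Theorem~\ref{thm:relative}. The only further point to check is that $H$ may be nonabelian: the theorem's family $\calG$ allows arbitrary finite groups, so putting $H$ in that family costs nothing.
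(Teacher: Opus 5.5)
Your proposal is correct and follows the paper's proof essentially step for step. You pass to an abelian subgroup $A\leq H$ with $|A|\geq 3$, apply Theorem~\ref{thm:relative} to the finite fragment $\Delta_k$ from Lemma~\ref{lem:finitevariable} with upper family $\{H,G_1,\ldots,G_t\}$, and derive (i) from $Q_k\models\Id_k(\Pow(A))$. The only difference is cosmetic: the paper phrases (i) as a surjection from the relatively free $k$-generated member of $\Var(\Pow(A))$ onto the subsemiring, whereas you check the identities of $\Pow(A)$ directly by substituting generating terms, which is the same argument.
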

\begin{proof}
Choose an abelian subgroup $A\leq H$ with $|A|\geq 3$. By Lemma~\ref{lem:finitevariable}, there is a finite set $\Delta_k\subseteq\Id_k(\Pow(A))$ implying all of $\Id_k(\Pow(A))$. Apply Theorem~\ref{thm:relative} with lower group $A$, upper family $\{H,G_1,\ldots,G_t\}$, and finite fragment $\Delta_k$. This yields $Q_k$ and $\sigma_k$ satisfying (ii)--(iv), and $Q_k$ satisfies every identity in $\Id_k(\Pow(A))$.

Let $T$ be a subsemiring of $Q_k$ generated by $a_1,\ldots,a_k$, repeating generators if necessary. If two terms in $x_1,\ldots,x_k$ induce the same term function on $\Pow(A)$, their equality holds in $Q_k$ and hence under $x_i\mapsto a_i$. Therefore this assignment induces a surjective homomorphism from the relatively free $k$-generated semiring of $\Var(\Pow(A))$ onto $T$. It follows that
\[
 T\in\Var(\Pow(A))\subseteq\Var(\Pow(H)),
\]
which proves (i).
\end{proof}

\begin{proof}[Proof of Theorem~\ref{thm:interval}]
Let $\calU=\Var(\Pow(H))$. For each $k\geq 1$, take $Q_k$ and $\sigma_k$ from Proposition~\ref{prop:localwitness}. The inequality $\sigma_k$ holds in every generating semiring of $\calW$, so $Q_k\notin\calW$. Every subsemiring of $Q_k$ generated by at most $k$ elements belongs to $\calU$. Lemma~\ref{lem:local} now gives infinite axiomatic rank for each $\calV$ in~\eqref{eq:maininterval}. Every finite identity basis has a finite bound on its number of variables, so each such $\calV$ is nonfinitely based.
\end{proof}

\begin{proof}[Proof of Corollaries~\ref{cor:zerointerval} and~\ref{thm:classification}]
Take $H=G$ and $t=1$ in Theorem~\ref{thm:interval}. Since $\Pow(G)$ is a subsemiring of $\Full{G}$, this proves Corollary~\ref{cor:zerointerval} and the nonfinite-basis assertion of Corollary~\ref{thm:classification}.

For $|G|=1$, the semiring $\Full{G}$ is the two-element distributive lattice. Relative to the ai-semiring axioms, it has the finite basis
\[
 x^2\approx x,\qquad xy\approx yx,\qquad xy\preceq x.
\]
For $|G|=2$, we have $G\cong C_2$, and a basis for $\Var(\Full{C_2})$ relative to the ai-semiring axioms is
\begin{equation}\label{eq:C2basis}
 x^3\approx x,\qquad xy\approx yx
\end{equation}
by Ren and Zhao~\cite[Lemma~3.7]{rz}. Since the ai-semiring axioms form a finite set, both semirings are finitely based.
\end{proof}

\begin{corollary}\label{cor:finitejoins}
For finite groups $G_1,\ldots,G_t$, the following are equivalent:
\begin{enumerate}[label=\textup{(\roman*)},leftmargin=2.4em]
\item $\Var(\Full{G_1},\ldots,\Full{G_t})$ is finitely based;
\item $\prod_{j=1}^{t}\Full{G_j}$ is finitely based;
\item $|G_j|\leq 2$ for every $j$.
\end{enumerate}
\end{corollary}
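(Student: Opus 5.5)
We prove (iii)$\Rightarrow$(i), (iii)$\Rightarrow$(ii), and then derive (iii) from each of (i) and (ii) by contraposition; no new machinery beyond Theorem~\ref{thm:interval} and the small cases treated in the proof of Corollary~\ref{thm:classification} is needed. First note that (i) and (ii) concern the same variety. Each factor $\Full{G_j}$ is a homomorphic image of the product under projection, so $\Var(\prod_j\Full{G_j})=\Var(\Full{G_1},\ldots,\Full{G_t})$. Hence (i)$\Leftrightarrow$(ii) holds trivially, and it suffices to prove (i)$\Leftrightarrow$(iii).

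\emph{(i)$\Rightarrow$(iii).} Suppose some $|G_j|\geq 3$ and take $H=G_j$ in Theorem~\ref{thm:interval}. Since $\Pow(G_j)$ is a subsemiring of $\Full{G_j}$, we get $\Var(\Pow(H))\subseteq\calW=\Var(\Full{G_1},\ldots,\Full{G_t})$. So $\calW$ itself lies in the interval~\eqref{eq:maininterval} and is nonfinitely based.

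\emph{(iii)$\Rightarrow$(i).} Now assume every $|G_j|\leq 2$. Each $\Full{G_j}$ is then isomorphic to either $\Full{C_1}$, the two-element distributive lattice, or $\Full{C_2}$. So $\calW$ is one of $\Var(\Full{C_1})$, $\Var(\Full{C_2})$, or their join. We show the join equals $\Var(\Full{C_2})$. The set $\{\varnothing,\{1\}\}\subseteq\Full{C_2}$ is closed under union and product. The element $\{1\}$ behaves as a multiplicative identity that is absorbed additively by nothing but itself, so this subset is a subsemiring isomorphic to $\Full{C_1}$, with $\{1\}$ playing the role of $\{e\}$. Hence $\Var(\Full{C_1})\subseteq\Var(\Full{C_2})$, and the join collapses to $\Var(\Full{C_2})$. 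In every case $\calW$ is one of the two varieties already shown finitely based in the proof of Corollary~\ref{thm:classification}, namely with basis $x^2\approx x$, $xy\approx yx$, $xy\preceq x$, or with basis~\eqref{eq:C2basis}. This gives (i).

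The only step needing care is identifying the join in the mixed case $t\geq2$; the subsemiring embedding of $\Full{C_1}$ into $\Full{C_2}$ handles it. The nontrivial content of the corollary is carried entirely by Theorem~\ref{thm:interval}.
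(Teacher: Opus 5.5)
Your proposal is correct and follows the paper's proof. It proves (i)$\Leftrightarrow$(ii) because the product and its factors generate the same variety. It proves (i)$\Rightarrow$(iii) by applying Theorem~\ref{thm:interval} with $H=G_j$ and $\calV=\calW$. It proves (iii)$\Rightarrow$(i) via the embedding $\Full{C_1}\hookrightarrow\Full{C_2}$ onto $\{\varnothing,\{1\}\}$, which reduces the join to one of the two finitely based varieties. Your opening roadmap does not match the implications you actually prove, and the remark that $\{1\}$ is ``absorbed additively by nothing but itself'' is garbled, but neither affects the argument: it only needs $\{\varnothing,\{1\}\}$ to be closed under union and product, which you check.
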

\begin{proof}
A finite direct product generates the join of the varieties generated by its factors, since each projection is onto and the product belongs to the join. This proves the equivalence of (i) and (ii). If $|G_j|\geq 3$ for some $j$, apply Theorem~\ref{thm:interval} with $H=G_j$. Otherwise the join is either $\Var(\Full{C_1})$ or $\Var(\Full{C_2})$, because $\Full{C_1}$ embeds into $\Full{C_2}$. Both are finitely based.
\end{proof}

\begin{corollary}\label{cor:mixed}
Let $G_1,\ldots,G_t$ be finite groups, and choose
\[
 S_j\in\{\Pow(G_j),\Full{G_j}\}\qquad(1\leq j\leq t).
\]
If $|G_i|\geq 3$ for some $i$, then every variety in
\[
 [\Var(\Pow(G_i)),\Var(S_1,\ldots,S_t)]
\]
has infinite axiomatic rank and is nonfinitely based. In particular, $\prod_{j=1}^{t}S_j$ is nonfinitely based.
\end{corollary}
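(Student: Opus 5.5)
The plan is to reduce Corollary~\ref{cor:mixed} to Theorem~\ref{thm:interval} by a sandwich argument. Put $\calW=\Var(\Full{G_1},\ldots,\Full{G_t})$ and $\calV_S=\Var(S_1,\ldots,S_t)$. First I check that the given interval lies inside an interval covered by Theorem~\ref{thm:interval}. For the lower end, the index $i$ with $|G_i|\geq 3$ gives $\Pow(G_i)\in\calV_S$: either $S_i=\Pow(G_i)$ directly, or $S_i=\Full{G_i}$, and $\Pow(G_i)$ is a subsemiring of $\Full{G_i}$, being closed under union and product of nonempty sets. For the upper end, each $S_j$ is either $\Full{G_j}$ or its subsemiring $\Pow(G_j)$, so $S_j\in\calW$ and hence $\calV_S\subseteq\calW$. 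Thus
\[
 [\Var(\Pow(G_i)),\calV_S]\subseteq[\Var(\Pow(G_i)),\calW].
\]

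Next I apply Theorem~\ref{thm:interval} with $H=G_i$ and the family $G_1,\ldots,G_t$. It says that every variety in the larger interval has infinite axiomatic rank and is nonfinitely based. So every variety in the given interval has the same two properties.

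For the final assertion, I argue as in Corollary~\ref{cor:finitejoins}. The finite direct product $\prod_j S_j$ generates exactly $\calV_S$. It lies in $\calV_S$ because varieties are closed under products. Conversely, each $S_j$ is a homomorphic image of the product under the projection, which is surjective since every factor is nonempty. Hence $\Var(\prod_j S_j)=\calV_S$, which is the top endpoint of the interval, and so it is nonfinitely based.

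There is no real obstacle here. The only points needing care are that $\Pow(G_j)$ is a subsemiring of $\Full{G_j}$, and that Theorem~\ref{thm:interval} only requires $\calW$ to be generated by full power semirings, not the intermediate variety $\calV_S$.
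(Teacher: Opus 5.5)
Your proposal is correct and follows the paper's proof. Both arguments place $\Var(\Pow(G_i))\subseteq\Var(S_1,\ldots,S_t)\subseteq\Var(\Full{G_1},\ldots,\Full{G_t})$ via the embeddings $\Pow(G_j)\hookrightarrow\Full{G_j}$, apply Theorem~\ref{thm:interval} with $H=G_i$, and identify $\Var(\prod_j S_j)$ with the join of the factor varieties.
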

\begin{proof}
The lower endpoint is contained in the upper endpoint because $\Pow(G_i)$ embeds into $S_i$. Each $S_j$ embeds into $\Full{G_j}$, so the upper endpoint is contained in $\Var(\Full{G_1},\ldots,\Full{G_t})$. Apply Theorem~\ref{thm:interval} and use the equality between the variety of a finite direct product and the join of the varieties of its factors.
\end{proof}

\section{Zero adjunction and further intervals}\label{sec:extensions}
For an ai-semiring variety $\calU$, put
\[
 \calU^0=\Var\{S^0:S\in\calU\}.
\]
A variety $\calU$ is called \emph{zero-closed} if $\calU^0=\calU$.
The next proposition records the properties of this operation that we need; see also~\cite[Corollaries~2.2 and~2.3]{grz}.

\begin{proposition}\label{prop:zerooperator}
Zero adjunction defines an extensive, monotone, idempotent operation on the lattice of ai-semiring varieties. For every nonempty family $\{S_i:i\in I\}$,
\begin{equation}\label{eq:zerogenerators}
 \bigl(\Var\{S_i:i\in I\}\bigr)^0
 =\Var\{S_i^0:i\in I\}.
\end{equation}
\end{proposition}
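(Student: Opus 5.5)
The plan is to work entirely at the level of identities, using Lemma~\ref{lem:zeroidentity}. I will establish the displayed equation \eqref{eq:zerogenerators} first, because the three lattice properties follow from it almost immediately. Write $\calU=\Var\{S_i:i\in I\}$. The inclusion $\Var\{S_i^0\}\subseteq\calU^0$ holds by definition, since each $S_i$ lies in $\calU$. For the reverse inclusion I will show that every identity of $\Var\{S_i^0\}$ holds in $S^0$ for every $S\in\calU$. By Lemma~\ref{lem:reduction}, applied one identity at a time, it suffices to treat word inequalities $\bq\preceq\bu$.

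So suppose $S_i^0\models\bq\preceq\bu$ for all $i$. Lemma~\ref{lem:zeroidentity} gives two facts. First, $D_{\bq}(\bu)\neq\varnothing$; this condition depends only on the terms, so it needs $I\neq\varnothing$, which explains the nonemptiness hypothesis. Second, $S_i\models\bq\preceq D_{\bq}(\bu)$ for each $i$. Hence $\calU\models\bq\preceq D_{\bq}(\bu)$, and so every $S\in\calU$ satisfies it. Applying Lemma~\ref{lem:zeroidentity} in the reverse direction gives $S^0\models\bq\preceq\bu$. Because a variety is determined by its identities, this proves \eqref{eq:zerogenerators}.

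The remaining properties then follow quickly. For extensivity, each $S$ is a subsemiring of $S^0$, so $\calU\subseteq\calU^0$. Monotonicity is immediate from the definition, since $\calU\subseteq\calU'$ enlarges the generating class. For idempotence, apply \eqref{eq:zerogenerators} to the generating family $\{S^0:S\in\calU\}$ of $\calU^0$. This gives $(\calU^0)^0=\Var\{(S^0)^0:S\in\calU\}$. A second application of Lemma~\ref{lem:zeroidentity} shows that $(S^0)^0$ and $S^0$ satisfy the same word inequalities, because the right-hand condition is the same for $S^0$ as for $S$. Therefore $(\calU^0)^0=\calU^0$.

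I expect the main obstacle to be a degenerate case: the empty variety $\calU$, or the trivial one-element variety. In that case $\{S^0\}$ still generates something sensible, but it matters whether the class is nonempty. I will handle it by noting that every variety contains the trivial algebra, so the generating family is never empty. A second point to check is that Lemma~\ref{lem:zeroidentity} characterizes validity in terms of $D_{\bq}(\bu)$ in a way that is uniform over $S$. This is exactly what transfers identities from the generators to the whole of $\calU$.
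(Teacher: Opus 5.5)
Your proof is correct and takes essentially the same route as the paper. You derive \eqref{eq:zerogenerators} by using Lemma~\ref{lem:zeroidentity} to transfer $\bq\preceq D_{\bq}(\bu)$ from the generators $S_i$ to all of $\calU$, and you get idempotence by applying \eqref{eq:zerogenerators} to $\{S^0:S\in\calU\}$ and checking that $(S^0)^0$ and $S^0$ satisfy the same inequalities. Your ``second application'' of the lemma is where the paper invokes $D_{\bq}(D_{\bq}(\bu))=D_{\bq}(\bu)$, and your remark that $\{S^0:S\in\calU\}$ is nonempty because $\calU$ contains the trivial algebra supplies a detail the paper leaves implicit.
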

\begin{proof}
Extensivity follows from the embedding $S\hookrightarrow S^0$, and monotonicity follows from the definition.

To prove~\eqref{eq:zerogenerators}, set $\calU=\Var\{S_i:i\in I\}$. Consider an inequality $\bq\preceq\bu$ with a word on the left that holds in every $S_i^0$. By Lemma~\ref{lem:zeroidentity}, the term $D_{\bq}(\bu)$ is nonempty and
\[
 S_i\models\bq\preceq D_{\bq}(\bu)\qquad(i\in I).
\]
The latter inequality therefore holds in every $S\in\calU$, so $S^0\models\bq\preceq\bu$ for every such $S$. Lemma~\ref{lem:reduction} gives the assertion for arbitrary identities. The reverse containment follows because $S_i\in\calU$.

Finally, Lemma~\ref{lem:zeroidentity} and
\[
 D_{\bq}\bigl(D_{\bq}(\bu)\bigr)=D_{\bq}(\bu)
\]
show that $S^0$ and $(S^0)^0$ satisfy the same identities. Applying~\eqref{eq:zerogenerators} to the family of zero extensions of members of $\calU$ yields $(\calU^0)^0=\calU^0$.
\end{proof}

\begin{corollary}\label{cor:generalzero}
Let $G_1,\ldots,G_t$ be finite groups and put
\[
 \calW=\Var(\Full{G_1},\ldots,\Full{G_t}).
\]
Let $H$ be a finite group with $|H|\geq 3$, and suppose that
\[
 \Var(\Pow(H))\subseteq\calU\subseteq\calW.
\]
Then $\calU^0\subseteq\calW$, and every variety in $[\calU,\calU^0]$ has infinite axiomatic rank and is nonfinitely based.
\end{corollary}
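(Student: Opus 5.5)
We first show $\calU^0\subseteq\calW$. Since $\calU\subseteq\calW$ and zero adjunction is monotone (Proposition~\ref{prop:zerooperator}), we get $\calU^0\subseteq\calW^0$. It therefore suffices to prove that $\calW$ is zero-closed. By \eqref{eq:zerogenerators},
\[
 \calW^0=\Var\bigl((\Full{G_1})^0,\ldots,(\Full{G_t})^0\bigr).
\]
Moreover $\Full{G_j}=\Pow(G_j)^0$, so $(\Full{G_j})^0=(\Pow(G_j)^0)^0$. The last step of the proof of Proposition~\ref{prop:zerooperator} shows that $S^0$ and $(S^0)^0$ satisfy the same identities. Applying this with $S=\Pow(G_j)$ gives $\Var((\Full{G_j})^0)=\Var(\Full{G_j})$, hence $\calW^0=\calW$. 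An alternative is to apply \eqref{eq:zerogenerators} to the family $\{\Pow(G_j)\}$, which gives $\Var(\Pow(G_1),\ldots,\Pow(G_t))^0=\calW$. Idempotence of zero adjunction then yields $\calW^0=\calW$ directly.

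For the interval statement, let $\calV\in[\calU,\calU^0]$. Then
\[
 \Var(\Pow(H))\subseteq\calU\subseteq\calV\subseteq\calU^0\subseteq\calW,
\]
so $\calV$ lies in the interval \eqref{eq:maininterval}. Theorem~\ref{thm:interval} then says $\calV$ has infinite axiomatic rank and is nonfinitely based. Equivalently, $[\calU,\calU^0]$ is a subinterval of $[\Var(\Pow(H)),\calW]$, to which Lemma~\ref{lem:local} already applies with the witnesses $Q_k$ of Proposition~\ref{prop:localwitness}.

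The main point to check is the zero-closedness of $\calW$. We must confirm that the identification $(S^0)^0$ versus $S^0$ is valid at the level of generated varieties. Lemma~\ref{lem:zeroidentity} handles inequalities with a word on the left, and Lemma~\ref{lem:reduction} reduces every identity to such inequalities. Apart from this, the argument is a direct containment chain.
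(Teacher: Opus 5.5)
Your proof is correct and matches the paper's argument: zero-closedness of $\calW$ from Proposition~\ref{prop:zerooperator}, then monotonicity to get $\calU^0\subseteq\calW$, then the containment chain into the interval of Theorem~\ref{thm:interval}. Your second route to $\calW^0=\calW$ is what underlies the paper's one-line appeal to that proposition: apply \eqref{eq:zerogenerators} to the family $\{\Pow(G_j)\}$, then use idempotence.
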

\begin{proof}
By Proposition~\ref{prop:zerooperator}, the variety $\calW$ is fixed by zero adjunction. Monotonicity gives $\calU^0\subseteq\calW^0=\calW$. Every variety in $[\calU,\calU^0]$ therefore lies in the interval covered by Theorem~\ref{thm:interval}.
\end{proof}

We next construct upper endpoints defined directly by separating inequalities. For a fixed finite group $H$ of order at least three and a fixed finite family $\calG=\{G_1,\ldots,G_t\}$, choose a pair $(Q_k,\sigma_k)$ from Proposition~\ref{prop:localwitness} for each $k\geq 1$. Define
\begin{equation}\label{eq:equationalupper}
 \calW_{H,\calG}
 =\{S:S\text{ is an ai-semiring and }S\models\sigma_k
       \text{ for every }k\geq 1\}.
\end{equation}
This construction parallels the use of equational upper endpoints in~\cite[Theorem~3.4]{grsy}.

\Needspace{9\baselineskip}
\begin{theorem}\label{thm:envelope}
For the choices in~\eqref{eq:equationalupper}, the following hold:
\begin{enumerate}[label=\textup{(\roman*)},leftmargin=2.4em]
\item $\Var(\Full{H},\Full{G_1},\ldots,\Full{G_t})\subseteq\calW_{H,\calG}$;
\item $\calW_{H,\calG}^0=\calW_{H,\calG}$;
\item every variety in $[\Var(\Pow(H)),\calW_{H,\calG}]$ has infinite axiomatic rank and is nonfinitely based.
\end{enumerate}
\end{theorem}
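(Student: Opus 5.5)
Part (i) follows from Proposition~\ref{prop:localwitness}(iii). Each $\sigma_k$ holds in $\Full{H}$ and in every $\Full{G_j}$. By~\eqref{eq:equationalupper}, the class $\calW_{H,\calG}$ is defined by a set of identities, so it is a variety. It therefore contains the variety generated by these semirings.

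For (ii), extensivity from Proposition~\ref{prop:zerooperator} gives $\calW_{H,\calG}\subseteq\calW_{H,\calG}^0$. For the reverse inclusion, it suffices to show that $S^0\models\sigma_k$ whenever $S\in\calW_{H,\calG}$. Indeed, the zero extensions generate $\calW_{H,\calG}^0$, and $\calW_{H,\calG}$ is a variety. Write $\sigma_k$ as $\bq\preceq\bu$, where $\bq$ is a word. This form is available because, in the proof of Theorem~\ref{thm:relative}, the separating inequality has the form $\bq_r\preceq\bu_D$. By Proposition~\ref{prop:localwitness}(iv), every summand $\bu_i$ of $\bu$ satisfies $c(\bu_i)=c(\bq)$. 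Hence $D_{\bq}(\bu)=\bu$, which is nonempty. Lemma~\ref{lem:zeroidentity} then reduces $S^0\models\sigma_k$ to $S\models\sigma_k$, and the latter holds by assumption. Applying this to every $k$ gives $S^0\in\calW_{H,\calG}$.

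For (iii), we apply Lemma~\ref{lem:local} with $\calU=\Var(\Pow(H))$ and $\calW=\calW_{H,\calG}$, using the semirings $Q_k$ chosen in the definition. Condition (i) of Lemma~\ref{lem:local} is Proposition~\ref{prop:localwitness}(i). Condition (ii) holds because $Q_k\not\models\sigma_k$, while every member of $\calW_{H,\calG}$ satisfies $\sigma_k$. The interval is nonempty because $\Pow(H)\subseteq\Full{H}$, so part (i) gives $\Var(\Pow(H))\subseteq\calW_{H,\calG}$. Thus every variety in the interval has infinite axiomatic rank. As in the proof of Theorem~\ref{thm:interval}, infinite axiomatic rank implies nonfinite basedness, since a finite basis involves only boundedly many variables.

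The only delicate point is in (ii): we need $\sigma_k$ to be a single word inequality with equal content on both sides. Proposition~\ref{prop:localwitness} states the equal-content property explicitly, and the construction supplies the word on the left. If $\sigma_k$ were instead presented as a general identity, Lemma~\ref{lem:reduction} would split it into word inequalities. Each of these still has equal content throughout, so the same argument applies.
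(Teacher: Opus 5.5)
Your proof is correct and follows the paper's argument: (i) comes from Proposition~\ref{prop:localwitness}(iii), (ii) comes from $D_{\bq}(\bu)=\bu$ together with Lemma~\ref{lem:zeroidentity} and extensivity, and (iii) comes from Lemma~\ref{lem:local} applied to the pairs $(Q_k,\sigma_k)$. Your extra remarks only make explicit what the paper leaves implicit, namely that $\calW_{H,\calG}$ is a variety because it is equationally defined and that $\Var(\Pow(H))\subseteq\calW_{H,\calG}$.
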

\begin{proof}
Part (i) follows from Proposition~\ref{prop:localwitness}(iii). For (ii), write $\sigma_k$ as $\bq_k\preceq\bu_k$. Every word in $\bu_k$ has content $c(\bq_k)$, so $D_{\bq_k}(\bu_k)=\bu_k$. Lemma~\ref{lem:zeroidentity} shows that $S\models\sigma_k$ implies $S^0\models\sigma_k$. Thus $\calW_{H,\calG}^0\subseteq\calW_{H,\calG}$, and extensivity gives equality.

For (iii), every subsemiring of $Q_k$ generated by at most $k$ elements belongs to $\Var(\Pow(H))$, whereas $Q_k\notin\calW_{H,\calG}$ because it fails $\sigma_k$. Apply Lemma~\ref{lem:local}.
\end{proof}

\begin{corollary}\label{cor:envelopezero}
If $\Var(\Pow(H))\subseteq\calU\subseteq\calW_{H,\calG}$, then every variety in $[\calU,\calU^0]$ has infinite axiomatic rank and is nonfinitely based.
\end{corollary}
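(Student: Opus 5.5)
The plan is to reduce Corollary~\ref{cor:envelopezero} to Theorem~\ref{thm:envelope}(iii), exactly as Corollary~\ref{cor:generalzero} was reduced to Theorem~\ref{thm:interval}. Given $\calU$ with $\Var(\Pow(H))\subseteq\calU\subseteq\calW_{H,\calG}$, I will show that
\[
 [\calU,\calU^0]\subseteq[\Var(\Pow(H)),\calW_{H,\calG}].
\]
Theorem~\ref{thm:envelope}(iii) then gives infinite axiomatic rank for every member of the smaller interval, and hence the nonfinite basis property.

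First, the lower bound. If $\calV\in[\calU,\calU^0]$, then $\Var(\Pow(H))\subseteq\calU\subseteq\calV$ by hypothesis.

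Second, the upper bound $\calU^0\subseteq\calW_{H,\calG}$. The class $\calW_{H,\calG}$ is defined by the identities $\sigma_k$ in~\eqref{eq:equationalupper}, so it is a variety. Proposition~\ref{prop:zerooperator} says zero adjunction is monotone, so $\calU\subseteq\calW_{H,\calG}$ gives $\calU^0\subseteq\calW_{H,\calG}^0$. Theorem~\ref{thm:envelope}(ii) gives $\calW_{H,\calG}^0=\calW_{H,\calG}$. Combining these, every $\calV\in[\calU,\calU^0]$ satisfies $\calV\subseteq\calU^0\subseteq\calW_{H,\calG}$.

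Putting the two bounds together places $\calV$ in $[\Var(\Pow(H)),\calW_{H,\calG}]$, and Theorem~\ref{thm:envelope}(iii) applies.

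No step is a real obstacle. The only point needing care is that monotonicity and idempotence of zero adjunction are applied to an equationally defined class. Since $\calW_{H,\calG}$ is a variety, Proposition~\ref{prop:zerooperator} applies to it directly, and the zero-closure needed is already established in Theorem~\ref{thm:envelope}(ii).
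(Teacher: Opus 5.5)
Your proposal is correct and is exactly the paper's argument: monotonicity of zero adjunction together with Theorem~\ref{thm:envelope}(ii) gives $\calU^0\subseteq\calW_{H,\calG}^0=\calW_{H,\calG}$, so $[\calU,\calU^0]\subseteq[\Var(\Pow(H)),\calW_{H,\calG}]$ and Theorem~\ref{thm:envelope}(iii) applies. Only monotonicity and the zero-closure of $\calW_{H,\calG}$ are used here; the idempotence of zero adjunction you mention plays no role.
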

\begin{proof}
Monotonicity and Theorem~\ref{thm:envelope}(ii) give $\calU^0\subseteq\calW_{H,\calG}$. The assertion follows from Theorem~\ref{thm:envelope}(iii).
\end{proof}

\section{Examples}\label{sec:examples}
\subsection{Variables occurring only in the upper term}
The inequality
\begin{equation}\label{eq:auxexample}
 x\preceq x^3+y+y^2+xy^2
\end{equation}
holds in $\Pow(C_3)$. To verify it, identify $C_3$ with the additive group of $\F_3$, choose nonempty $X,Y\subseteq\F_3$, and let $a\in X$ and $b\in Y$. If $a=0$, then $a\in X^3$. If $b=0$, then $a\in XY^2$. In the remaining case, $a$ equals either $b$ or $2b$, so $a\in Y\cup Y^2$.

For this inequality, the occurrence module is $\F_3^2$, with one vector for $x$ and one auxiliary vector for $y$. With $\alpha=(1,0)$, its blocker is
\[
 B=\{(1,0),(1,2),(1,1),(0,1)\}.
\]
It avoids zero and has a difference space of dimension two. Thus $|L(B)|=9<27=3^{|B|-1}$. This example exhibits the additional variables accommodated by Section~\ref{sec:quotients}.

\subsection{Prime and composite exponents}
For $H=C_3$, Proposition~\ref{prop:defect} gives affine dependence over $\F_3$, as in the occurrence construction of~\cite{note}. For $H=C_p^d$, the blockers meet kernels of maps into $\F_p^d$. For $H=C_4$, the ambient modules are powers of $\Z/4\Z$ and the relevant estimate is $|L(C)|<4^{|C|-1}$.

The group $C_2$ has a different configuration. The set
\[
 C=\{(1,0),(0,1),(1,1)\}\subseteq\F_2^2
\]
meets the kernel of every map to $C_2$ and satisfies $|L(C)|=4=2^{|C|-1}$. For the target group $C_2\times C_2$, this set is not a blocker, since the identity map has kernel $\{0\}$. Theorem~\ref{thm:interval} applies to $C_2\times C_2$ and to all larger elementary abelian $2$-groups.

\subsection{Ordered products in a nonabelian group}
For $G=S_3$ and its subgroup $H=C_3$, take $n=3$ and $m=6$. The lifting polynomials in~\eqref{eq:liftpoly} are
\[
 \bp_0(x)=x+x^4,\qquad
 \bp_1(x)=x^3+x^6,\qquad
 \bp_2(x)=x^2+x^5.
\]
Each has a single value on singleton inputs in an exponent-three abelian group. Their exponent lifts give the dense sets required for $S_3$, while every product in~\eqref{eq:separatorpoly} retains the order $x_1,\ldots,x_r$. In particular, Theorem~\ref{thm:interval} gives infinite axiomatic rank throughout
\[
 [\Var(\Pow(C_3)),\Var(\Full{S_3})].
\]

\subsection{The infinite cyclic group}
Let $\mathsf{CAI}$ denote the variety of all commutative ai-semirings.

\begin{proposition}\label{prop:infinite}
The semiring $\Full{\Z}$ generates $\mathsf{CAI}$. If $\{m_j:j\in J\}$ is an unbounded set of positive integers, then
\[
 \Var\{\Full{C_{m_j}}:j\in J\}=\mathsf{CAI}.
\]
In particular, both varieties are finitely based.
\end{proposition}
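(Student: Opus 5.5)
The plan is to prove two inclusions. First, $\Full{\Z}$ and each $\Full{C_m}$ are commutative ai-semirings, so they lie in $\mathsf{CAI}$. For the reverse direction we show that every identity satisfied by the relevant generators already holds in $\mathsf{CAI}$. By Lemma~\ref{lem:reduction} it is enough to treat word inequalities $\bq\preceq\bu$, with $\bq$ a word and $\bu=\bu_1+\cdots+\bu_s$ a sum of words. Since $\mathsf{CAI}$ is defined by the ai-semiring axioms together with $xy\approx yx$, it is finitely based, and the final sentence of the proposition follows at once.

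\emph{Description of $\Id(\mathsf{CAI})$.} The free commutative ai-semiring on $X$ is the semiring of finite nonempty sets of commutative monomials, with union as addition and pointwise multiplication of monomials. Hence $\mathsf{CAI}\models\bq\preceq\bu$ if and only if some summand $\bu_i$ has the same occurrence vector as $\bq$, that is, $\occ(x,\bu_i)=\occ(x,\bq)$ for every variable $x$. So for the reverse inclusion we assume that no $\bu_i$ matches $\bq$, and we must refute the inequality in the generators.

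\emph{Refutation in $\Full{\Z}$.} Let $\{x_1,\ldots,x_\ell\}$ be the variables occurring in $\bq$ or $\bu$, with $t_i=\occ(x_i,\bq)$. First suppose some summand $\bu_i$ contains a variable $y$ outside $c(\bq)$. We assign $\varnothing$ to each such variable, which kills exactly those summands (the zero-adjunction idea of Lemma~\ref{lem:zeroidentity}), so we may assume $c(\bu_i)\subseteq c(\bq)$ for all $i$. Now we work in $\Pow(\Z)$ with singletons, choosing $x_i\mapsto\{N^{i}\}$ for a large integer $N$. Then $\bq$ evaluates to $\{\sum_i t_iN^i\}$ and each summand $\bu_j$ evaluates to $\{\sum_i\occ(x_i,\bu_j)N^i\}$. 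Once $N$ exceeds every occurrence count, base-$N$ expansion is unique, so these values are all distinct from that of $\bq$. Hence the inequality fails.

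\emph{Refutation in the cyclic family.} We use the same evaluation in $\Full{C_{m_j}}$, reading the values modulo $m_j$ and choosing $m_j>N^{\ell+1}$. This is possible because the $m_j$ are unbounded. Since all the sums involved lie strictly between $0$ and $N^{\ell+1}$, reduction modulo $m_j$ is injective on them, and the failure persists. Each $\Full{C_{m_j}}$ is commutative, so the generated variety equals $\mathsf{CAI}$.

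The main obstacle is the description of $\Id(\mathsf{CAI})$ in the first step. Here one must verify that the free object really is the set-of-monomials semiring, so that distributivity and commutativity cannot derive $\bq\preceq\bu$ without a matching summand. A quick route is to check that this semiring satisfies the defining axioms and has the universal property.
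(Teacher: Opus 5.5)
Your proof is correct and takes essentially the paper's route: the singleton assignment $x_i\mapsto\{N^i\}$ gives distinct commutative monomials distinct values by uniqueness of base-$N$ expansion, and choosing $m_j$ beyond all these values carries the separation over to $\Full{C_{m_j}}$. The differences are cosmetic. The paper compares full normal forms of two terms rather than word inequalities; your assignment of $\varnothing$ to variables outside $c(\bq)$ is unnecessary, since such a summand already has a different occurrence vector and singletons alone refute the inequality even in $\Pow(\Z)$; and your ``main obstacle'' disappears, because you only need the trivial direction (a matching summand gives $\mathsf{CAI}\models\bq\preceq\bu$), while the converse follows from your refutation since $\Full{\Z}\in\mathsf{CAI}$.
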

\begin{proof}
The displayed semirings are commutative. Suppose two terms in $x_1,\ldots,x_r$ have different distributive normal forms as finite sets of commutative monomials. Choose an integer $B\geq 2$ larger than every exponent occurring in either normal form, and assign
\[
 x_i\longmapsto\{B^{i-1}\}\subseteq\Z.
\]
A monomial with exponent vector $(a_1,\ldots,a_r)$ evaluates to $\{\sum_i a_iB^{i-1}\}$. Distinct exponent vectors give distinct values by uniqueness of base-$B$ expansion. Thus the two terms have different values in $\Full{\Z}$.

For the cyclic family, choose $m_j$ larger than all the integer monomial values just obtained. Reduction modulo $m_j$ keeps these values distinct and separates the same two terms in $\Full{C_{m_j}}$. Hence in each case the common identities are precisely those of commutative ai-semirings, which have a finite basis.
\end{proof}

\appendix
\section{Expansions by constants}\label{app:constants}
For a group $G$ and a choice $\mathscr C\subseteq\{0,1\}$, let $\Full{G}_{\mathscr C}$ be the expansion of $\Full{G}$ in which the chosen constants are interpreted by
\[
 0=\varnothing,\qquad 1=\{1_G\}.
\]

\begin{proposition}\label{prop:constants}
If $G$ is a finite group with $|G|\geq 3$, then $\Full{G}_{\mathscr C}$ is nonfinitely based for every $\mathscr C\subseteq\{0,1\}$.
\end{proposition}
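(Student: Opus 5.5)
The plan is to run the argument of Section~\ref{sec:separation} in the expanded signature. I would not use varieties of subsemirings here. Instead, for each $k$ I would build one finite model with constants that satisfies every $k$-variable identity of $\Full{G}_{\mathscr C}$ but fails an identity of $\Full{G}_{\mathscr C}$. The model will be $Q^0$, with the constants interpreted as follows:
\begin{itemize}
\item $0$ is the adjoined zero;
\item $1$ is the class of $\{0\}$ in $Q=Q_{\calE}(V)$.
\end{itemize}
Note that $[\{0\}]$ is a multiplicative identity of $Q$, since $\theta_{\calE}$ is a congruence.

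\emph{First step: reduction of inequalities.} By Lemma~\ref{lem:reduction}, which works verbatim with constants, it suffices to treat inequalities $\bq\preceq\bu$ with $\bq$ a word. Since $1$ is a multiplicative identity both in $\Full{G}$ and in $Q^0$, I first delete $1$ from every word that is not the bare constant $1$. I then prove the analogue of Lemma~\ref{lem:zeroidentity}, with $S$ ranging over $\Pow(G)$ and $Q$, each expanded by the constant $1$. Let $D'_{\bq}(\bu)$ be the sum of those summands of $\bu$ that avoid the constant $0$ and have content inside $c(\bq)$. The claim is:
\begin{itemize}
\item if $\bq$ contains $0$, then $S^0\models\bq\preceq\bu$ trivially;
\item otherwise, $S^0\models\bq\preceq\bu$ if and only if $D'_{\bq}(\bu)\ne\varnothing$ and $S\models\bq\preceq D'_{\bq}(\bu)$.
\end{itemize}
The proof is the same as that of Lemma~\ref{lem:zeroidentity}. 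Hence every $k$-variable identity of $\Full{G}_{\mathscr C}$ reduces to finitely many $k$-variable inequalities valid in $\Pow(G)_1$, the power semiring with the constant $\{1_G\}$. Choose an abelian subgroup $A\leq G$ with $|A|\geq 3$ by Lemma~\ref{lem:abeliansubgroup}. Since $\Pow(A)_1$ embeds into $\Pow(G)_1$, these inequalities are valid in $\Pow(A)_1$. By Lemma~\ref{lem:finitevariable} applied to the finite algebra $\Pow(A)_1$, they all follow from a finite set $\Delta_k$.

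\emph{Second step: extending Section~\ref{sec:quotients} to the constant $1$ (main obstacle).} I would rerun that section with the constant $1$ allowed in words, evaluated as $\{0\}$ in every $\Pow(M)$. The only words containing $1$ are now the bare word $1$ and summands of $\bu$ that equal $1$. In the occurrence construction the constant contributes nothing to $\alpha_\varepsilon$ and evaluates to $\{0\}$ in $\bu(Y)$. So $B_\varepsilon$ is still defined, and the following should go through unchanged because every homomorphism $\psi$ maps $\{0\}$ to $\{0\}$:
\begin{itemize}
\item Lemma~\ref{lem:occurrence} (the blocker property);
\item Lemma~\ref{lem:fragmentmodel};
\item Lemma~\ref{lem:types} and Proposition~\ref{prop:defect}.
\end{itemize}
This is the step I expect to need the most care. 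In particular, the case $\bq=1$ must be checked: there $\alpha_\varepsilon=0$, and if $0\notin B_\varepsilon$ one still gets a forbidden configuration $-\psi(B_\varepsilon)\subseteq\bu(X)$. With this in hand, the proof of Theorem~\ref{thm:relative} yields $Q=Q_{\calE}(R^r)$ satisfying $\Delta_k$, together with the inequality $\varepsilon=(\bq_r\preceq\bu_D)$. This inequality is constant-free, its words all have the same content, and it holds in $\Full{G}$ but fails in $Q$ at the singletons $\{e_i\}$.

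\emph{Third step: conclusion.} By the first step, $Q^0$ with the chosen constants satisfies every identity of $\Full{G}_{\mathscr C}$ involving at most $k$ variables. Because $\varepsilon$ fails in $Q\subseteq Q^0$, the algebra $Q^0$ fails $\varepsilon$, which holds in $\Full{G}_{\mathscr C}$. So no set of identities in at most $k$ variables is a basis. Since $k$ is arbitrary, $\Full{G}_{\mathscr C}$ has infinite axiomatic rank and is therefore nonfinitely based. The case $\mathscr C=\varnothing$ is Corollary~\ref{thm:classification}.
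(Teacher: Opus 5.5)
Your proposal is correct and follows essentially the paper's route. It uses the same model $Q_{\calE}(V)^0$, with $0$ the adjoined zero and $1=[\{0\}]$, the same restriction of upper terms to the content of the lower word via zero adjunction, the occurrence construction with $1\mapsto\{0\}$, and the separator from Theorem~\ref{thm:relative} with upper family $\{G\}$. The only difference is bookkeeping: the paper treats an arbitrary finite $\Sigma$ and restricts it before applying Section~\ref{sec:quotients}, concluding only nonfinite basis, whereas you fix $k$ and pass through $\Id_k(\Pow(A)_1)$ via Lemma~\ref{lem:finitevariable}, which gives the slightly stronger infinite axiomatic rank.
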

\begin{proof}
Choose an abelian subgroup $H\leq G$ of order at least three, and let $\Sigma$ be a finite set of identities of $\Full{G}_{\mathscr C}$. They also hold in $\Full{H}_{\mathscr C}$. Normalize the terms by distributivity and the laws of the named constants. A normalized term is either $0$, when this symbol is present, or a nonempty sum of words; the empty word is allowed when $1$ is named. A valid identity cannot equate $0$ with a nonempty polynomial, as is seen by assigning $\{1_H\}$ to every variable.

Reduce identities between nonempty polynomials to word inequalities as in Lemma~\ref{lem:reduction}. In each inequality, assign the empty set to variables outside the content of its lower word. This gives a finite family $\calE$ of valid inequalities whose upper words use only variables from the lower word. The family $\calE$, together with the normalization laws, implies $\Sigma$. If the lower word is empty, the restricted upper term contains an empty word and the inequality is immediate; discard it.

For the remaining inequalities use the occurrence construction of Section~\ref{sec:quotients}. An empty upper word contributes the singleton $\{0\}$ in the additive module, which is preserved by all module homomorphisms. Thus the blocker property, the cardinality estimate, and the quotient argument give a finite quotient $Q_{\calE}(V)$ satisfying $\calE$ on nonempty subset representatives. Its multiplicative identity is the class of $\{0\}$.

Adjoin a new zero to $Q_{\calE}(V)$ and interpret the chosen constants by this zero and the class of $\{0\}$. Each restricted inequality remains valid: an assignment of the new zero to a variable of the lower word makes that word zero; all other assignments are covered by the quotient argument. Consequently the expanded semiring $Q_{\calE}(V)^0$ satisfies $\Sigma$.

Choose the dimension of $V$ and the separating inequality as in Theorem~\ref{thm:relative}, with upper family $\{G\}$. This inequality holds in $\Full{G}_{\mathscr C}$ and fails in $Q_{\calE}(V)$ under singleton assignments. The same failure holds in its zero extension. Thus every finite fragment $\Sigma$ has a model failing another identity of $\Full{G}_{\mathscr C}$, proving the assertion.
\end{proof}

\end{document}